\numberwithin{equation}{section}
\newtheorem{theorem}{Theorem}[section]
\newtheorem{prop}[theorem]{Proposition}
\newtheorem{cor}[theorem]{Corollary}
\newtheorem{lemma}[theorem]{Lemma}
\theoremstyle{definition}
\newtheorem{defi}{Definition}
\newtheorem{rem}[theorem]{Remark}
\newcommand{\RR}{\mathbb{R}}
\newcommand{\ZZ}{\mathbb{Z}}
\title{Higher dimensional M\"obius bands and their boundaries}
\author{Chady El Mir}
\email{chady.mir@gmail.com}
\address{Current address: \newline
Universit\'e Libanaise, Laboratoire de Math\'ematiques et Applications (LaMA-Liban), Tripoli, Liban}
\author{Jacques Lafontaine}
\email{jacques.lafontaine@univ-montp2.fr}
\address{Current address: \newline
Universit\'e de Montpellier, Institut Montpelli\'erain Alexander Grothendieck (UMR 5149), CC 0051, Place Eug\`ene Bataillon, F-34095, Montpellier Cedex 5, France}
\begin{document}
\date{}

\selectlanguage{english}
\begin{abstract}

We give a characterisation of Bieberbach manifolds which are geodesic boundaries of a compact flat
manifold, and discuss the low dimensional cases, up to dimension 4.

\end{abstract}
\medskip

\maketitle

\selectlanguage{english} \noindent {\bf Keywords}: flat manifolds, geodesic boundary, Bieberbach group.
\\
\newline
{\bf 2010 MSC}: 53C20, 53C22, 53C23.
\\
\newline

\section{Introduction}

\subsection{Presentation of our results}

A flat Riemannian manifold is a Riemannian manifold locally isometric to the Euclidean space $E^n$. If such a manifold is complete, then the exponential map is a Riemannian covering: complete flat manifolds are the quotients of the Euclidean space by a sub-group of affine isometries acting properly and freely.\\

In particular, flat compact manifolds are the quotients $E^n/\Gamma$, where $\Gamma$ is a discrete, co-compact and fixed points free group of affine isometries of $E^n$. Following \cite{charlap}, we call such groups \emph{Bieberbach groups}.\\

Bieberbach Theorem shows, more generally, that a \emph{crystallographic group}, i.e., a discrete and co-compact sub-group $\Gamma$ of affine isometries of $E^n$, is an extension  of a finite group $G$  by a lattice $\Lambda$ of $E^n$.
This lattice is the sub-group of elements of $\Gamma$ that are translations. Note that a crystallographic group is a Bieberbach group if and only
if it is torsion free (see \cite{wolf}, p.99).

\smallskip

\begin{theorem}[Bieberbach Theorem] i) Let $\Gamma$ be a discrete and co-compact sub-group of affine isometries of $E^n$, then the sub-group 
$\Lambda$ of the elements of $\Gamma$ that are translations is a normal sub-group of finite index in $\Gamma$ and a lattice of $E^n$.\\
ii) The number of crystallographic groups in dimension $n$ is finite up to an isomorphism. Two crystallographic groups on $E^n$ are isomorphic
if and only if they are conjugate by an element of the affine group.
\end{theorem}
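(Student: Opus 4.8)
The plan is to set up coordinates, reduce (i) to finiteness of the rotational image, prove that by the commutator method, and then deduce (ii) from the Jordan--Zassenhaus theorem together with a characterisation of the translation subgroup. Identify an affine isometry of $E^n$ with a pair $(A,a)\in O(n)\ltimes\RR^n$ acting by $x\mapsto Ax+a$, so that $(A,a)(B,b)=(AB,\,Ab+a)$ and the \emph{rotational part} map $\pi\colon(A,a)\mapsto A$ is a group homomorphism onto $O(n)$. Put $\Lambda:=\Gamma\cap\ker\pi$; this is exactly the set of translations lying in $\Gamma$, and as the kernel of $\pi|_\Gamma$ it is automatically normal in $\Gamma$. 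I would split (i) into: (a) the image $\pi(\Gamma)$ is finite, and (b) $\Lambda$ is a lattice of $E^n$. Granting (a), claim (b) is soft: $[\Gamma:\Lambda]=|\pi(\Gamma)|<\infty$, and the finite-to-one quotient map $E^n/\Lambda\to(E^n/\Lambda)/(\Gamma/\Lambda)=E^n/\Gamma$ is proper, so $E^n/\Lambda$ is compact, and a discrete subgroup of $\RR^n$ with compact quotient is a full lattice.

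The substance of (i) is claim (a), the classical first Bieberbach theorem, and establishing it is the main obstacle. Since $O(n)$ is compact, (a) is equivalent to $\pi(\Gamma)$ being discrete, and I would prove this by the commutator method. With the operator norm one has the quadratic estimate $\|ABA^{-1}B^{-1}-I\|\le 2\,\|A-I\|\,\|B-I\|$ in $O(n)$, so the rotational parts of commutators of near-identity rotations are much closer to $I$. Co-compactness supplies a constant $D$ for which $\Gamma$ is generated by the finite set of elements displacing a fixed basepoint by at most $D$ and controls the displacement produced by the commutators that arise, while discreteness of $\Gamma$ upgrades ``the rotational parts of a sequence of iterated commutators tend to $I$'' to ``they are eventually equal to $I$''. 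Feeding these together --- first a Zassenhaus-type step showing that the elements of $\Gamma$ close enough to the identity generate a nilpotent subgroup, then a second pass of the estimates against the generators --- one concludes that $\pi(\Gamma)$ cannot accumulate at $I$, hence is discrete, hence finite. The quantitative bookkeeping here is the delicate point; see \cite{charlap} or \cite{wolf}.

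For (ii), by (i) every crystallographic $\Gamma$ sits in an extension $1\to\Lambda\to\Gamma\to G\to1$ with $\Lambda\cong\ZZ^n$ a full lattice and $G=\pi(\Gamma)$ finite, and the conjugation action of $G$ on $\Lambda$ is faithful (an orthogonal map fixing a full lattice is the identity), giving an embedding $G\hookrightarrow\mathrm{GL}(n,\ZZ)$. Finiteness up to isomorphism would follow from two classical inputs: the Jordan--Zassenhaus theorem, which gives only finitely many conjugacy classes of finite subgroups of $\mathrm{GL}(n,\ZZ)$, hence finitely many pairs (finite group $G$, integral $G$-module $\ZZ^n$) up to equivalence; and the finiteness of $H^2(G;\ZZ^n)$ for $G$ finite and $\ZZ^n$ finitely generated --- each higher cohomology group being a finitely generated abelian group annihilated by $|G|$ --- which bounds, for each such pair, the number of extensions, hence the number of possible $\Gamma$ in dimension $n$.

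For the rigidity half of (ii) I would first observe that $\Lambda$ is the unique maximal abelian normal subgroup of $\Gamma$: if $N\lhd\Gamma$ is abelian and $(A,a)\in N$, conjugating by a translation $(I,v)\in\Lambda$ gives $(A,\,a+(I-A)v)\in N$, which commutes with $(A,a)$; comparing translational parts forces $(I-A)^2v=0$ for every $v$ in the lattice, hence $(I-A)^2=0$ on $\RR^n$, and an orthogonal unipotent matrix is the identity, so $(A,a)\in\Gamma\cap\RR^n=\Lambda$. Hence $\Lambda$ is characteristic, so any abstract isomorphism $\phi\colon\Gamma\to\Gamma'$ restricts to an isomorphism $\Lambda\to\Lambda'$, realised by some $C\in\mathrm{GL}(n,\RR)$ with $C\Lambda=\Lambda'$, and descends to an isomorphism $G\to G'$ intertwining the two integral representations. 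A standard comparison of the two $2$-cocycles then produces a vector $c$ with which conjugation by the affine map $(C,c)$ carries $\Gamma$ onto $\Gamma'$, the point being that the existence of $\phi$ already forces the relevant cohomology classes to agree; the converse implication is immediate. Thus the only genuinely hard step is the commutator analysis behind (a); once the Jordan--Zassenhaus theorem and the finiteness of the cohomology of finite groups are granted, the rest is formal.
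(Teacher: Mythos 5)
The paper does not prove this statement at all: it is quoted as the classical Bieberbach theorem, serving as background, with the reader referred to \cite{charlap} and \cite{wolf}, so there is no internal proof to compare yours with. Judged on its own, your outline follows the standard route of exactly those references and is essentially sound: the reduction of (i) to finiteness of the point group $\pi(\Gamma)$, with $\Lambda=\Gamma\cap\ker\pi$ automatically normal and the lattice property deduced from compactness of $E^n/\Lambda$; the commutator/Zassenhaus estimate $\|ABA^{-1}B^{-1}-I\|\le 2\|A-I\|\,\|B-I\|$ as the engine for finiteness of $\pi(\Gamma)$; Jordan--Zassenhaus together with finiteness of $H^2(G;\ZZ^n)$ (finitely generated and annihilated by $|G|$) for the finiteness claim in (ii); and, for rigidity, the identification of $\Lambda$ as the unique maximal abelian normal subgroup --- your computation forcing $(I-A)^2=0$ and hence $A=I$ for orthogonal $A$ is the correct one --- so that an abstract isomorphism matches lattices and point groups and only the translational part of the conjugating affine map remains to be found. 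Two caveats. First, the entire analytic content of (i), namely the quantitative bookkeeping showing that $\pi(\Gamma)$ cannot accumulate at the identity, is described but not carried out; as written this is an outline that defers the genuinely hard step to \cite{wolf} and \cite{charlap} (which is also all the paper does), not a self-contained proof. Second, in the rigidity step the mechanism that produces the vector $c$ is degree-one, not degree-two: after the linear part $C$ is fixed, the discrepancy between the two groups is a $1$-cochain, and one uses that every $1$-cocycle of the finite group $G$ with values in the uniquely divisible module $\RR^n$ is a coboundary (i.e.\ $H^1(G;\RR^n)=0$) to convert it into an actual translation; describing this as ``a comparison of the two $2$-cocycles'' is harmless shorthand but imprecise as stated. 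With those details filled in from the cited sources, your argument is the standard and correct one.
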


This shows that we have the following exact sequence of groups
$$
0\longrightarrow\Lambda\longrightarrow\Gamma\longrightarrow G\longrightarrow e
$$

where $G$ is finite.\\

The manifold $E^n/\Gamma$ admits a Riemannian covering by the flat torus $E^n/\Lambda$.
The group $G$ is the \emph{holonomy group} of the manifold. It admits a natural realization as
a subgroup of $O(n)$.\\

There are two types of Bieberbach groups in dimension $2$, $10$ in dimension $3$ and $74$ in dimension $4$. Obtaining a precise classication in any dimension seems difficult 
and maybe hopeless. Some classes have been studied by Charlap, see \cite{charlap}, and\ more recently
by  Szczepa\'nski, cf. \cite{szz}.

\smallskip
Bieberbach manifolds are known to be boundaries (see
\cite{gordon}  and \cite{hamroy}).
Here we address the more precise (and indeed very different) 
question of realizing a Bieberbach manifold as 
the \emph{totally geodesic boundary} of a compact flat manifold. In dimension $1$, the answer, although well-known, is not completely
trivial, since it involves the M\"obius band. 
The general situation, as the main result of this paper shows (see Theorem \ref{maintheo}) ,
is basically the same: an $n$-dimensional compact flat manifold with geodesic boundary is either a product
$I\times M$, where $M$ is an $(n-1)$ dimensional Bieberbach manifold and $I$ a compact interval (if the boundary is not connected) 
or a non-tivial Riemannian $I$-bundle over an $(n-1)$-dimensional Bieberbach $M^\prime$ (if the boundary is connected) ;
this boundary is a non-trivial $\ZZ_2$-bundle over $M^\prime$. In particular it admits a fixed-point free isometric involution. In the rest of the paper we give some applications of this  result.\\

In section $3$, we give first a classification of 3-dimensional flat manifolds with geodesic boundary;
those that have a connected boundary are somewhat the 3-dimensional M\"obius bands. There are $3$ types of such manifolds. In the same way as the Klein bottle can be realized by gluing two M\"obius bands, some Bieberbach manifolds can be obtained from them by a suitable gluing, see \ref{recollement}. Then, using Theorem \ref{maintheo} we specify, among the ten types (up to an affine diffeomorphism) of 3-dimensional Bieberbach manifolds those that are the geodesic boundaries of a compact flat manifold. This enables us to give a classication of flat compact $4$-manifolds with connected geodesic boundary (cf. \ref{list4}).       

In section $4$, we show that Bieberbach  manifolds whose  holonomy group is isomorphic to $\ZZ_{2m+1}$ or $\ZZ_2$  can be realised
as a geodesic boundary of a compact flat manifold.  

In section $5$, we study the 4-dimensional manifolds with a holonomy group isomorphic to $\ZZ_2^3$ (Generalized Hantzsche-Wendt manifolds) 
and give lowest-dimensional  examples of non-orientable Bieberbach manifolds that are not a geodesic boundaries of a compact flat manifold.

\subsection{Notations} All the manifolds we  consider are  supposed to be connected if the contrary is not explicitely stated.
We shall identify $E^n$
with $\RR^n$ equipped with its standard orthonormal basis.

 The translation of vector $\vec{v} \in \RR^n$ is denoted $t_v$ . 
If $V$ is an affine  subspace of $\RR^n$, and  $a$ a  vector parallel  to $V$, we  denote by $\sigma_{V,a}$ the glide reflection
obtained by composing the orthogonal  reflection with respect to  $V$ and the translation  $t_a$. 
If $V$ is a line, we shall content ourselves with mentioning the length of $a$.
 
The flat rectangular torus whose fundamental domain is a rectangle of sides $a$ and $b$ will be denoted $T_{a,b}$,
and the flat Klein bottle whose fundamental group is generated by  $(x,y)\mapsto (x+a/2,-y)$ and $(x,y)\mapsto (x,y+b)$
by $K_{a,b}$.

\section{The structure of flat manifolds with geodesic boundary} \label{mainsect}
\begin{theorem}

The universal (Riemannian) cover of a compact $n$-dimensional flat Riemannian manifold with geodesic boundary is isometric to
$I\times \mathbb{R}^{n-1}$.  
\end{theorem}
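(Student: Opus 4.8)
The plan is to reduce the statement to elementary convex geometry by doubling along the boundary. Write $N$ for the manifold in the statement, a compact flat $n$-manifold with totally geodesic boundary $\partial N$ (I assume $\partial N\neq\emptyset$, the case $\partial N=\emptyset$ being trivial). The key geometric input, which I would establish first, is that a collar of $\partial N$ is \emph{isometric to a Riemannian product} $[0,\varepsilon)\times(\partial N,g_0)$. Indeed, in the Fermi coordinates $\Phi(t,x)=\exp_x(t\nu_x)$ along $\partial N$ ($\nu$ the inner unit normal; such an $\varepsilon>0$ exists since $\partial N$ is compact) the metric becomes $dt^2+g_t$, and the shape operator $S_t$ of the slice $\{t\}\times\partial N$ obeys the Riccati equation $\dot S_t=-S_t^2-R(\,\cdot\,,\partial_t)\partial_t$; flatness kills the curvature term and $S_0=0$ since $\partial N$ is totally geodesic, so $S_t\equiv0$, hence $\dot g_t=0$ and $g_t\equiv g_0$. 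Consequently the double $DN=N\cup_{\partial N}N$ carries a smooth flat metric (it is $(-\varepsilon,\varepsilon)\times(\partial N,g_0)$ near $\partial N$ and two flat interiors elsewhere), is compact without boundary, and contains $\partial N$ as a totally geodesic hypersurface.

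Next I would lift to the Euclidean cover. Being compact and flat, $DN$ has universal Riemannian cover $\RR^n$; let $\pi\colon\RR^n\to DN$ denote it. The preimage $\pi^{-1}(\partial N)$ is a complete totally geodesic hypersurface of $\RR^n$, hence a locally finite disjoint union $\bigsqcup_i P_i$ of affine hyperplanes. Pick a connected component $C$ of the open $\pi$-saturated set $\pi^{-1}(\mathrm{int}\,N)$. Since a connected set disjoint from a hyperplane lies in one of the two open half-spaces it bounds, $C$ equals the intersection of all such half-spaces, so $C$, and hence $\overline C$, is convex, in particular simply connected. A little bookkeeping shows that $\pi$ restricts to a covering map $\overline C\to N$: over $\mathrm{int}\,N$ this is the restriction of a covering to a component of a saturated open set, and near $\partial\overline C$ it is a local homeomorphism of half-spaces, because $\pi^{-1}(\partial N)$ is an embedded hypersurface and so locally a single $P_i$. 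As $\overline C$ is simply connected, $\overline C\to N$ \emph{is} the universal cover.

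Finally I would identify the convex set $\overline C$. Since $\partial N\neq\emptyset$, $\overline C$ has non-empty boundary, and each of its faces is an entire hyperplane: near a boundary point $\partial\overline C$ lies in one $P_i$, and by convexity $P_i\cap\overline C$ is open and closed in $P_i$, hence all of $P_i$. If $P,P'$ are two faces then $P\subset\overline C$ lies in the closed half-space bounded by $P'$, which forces $P\parallel P'$; thus the faces are mutually parallel, and a convex set with non-empty interior has at most two parallel hyperplane faces. So $\overline C$ is a half-space or a slab between two parallel hyperplanes. The half-space is impossible, since $\pi_1(N)$ acts on $\overline C$ by affine isometries freely, properly and cocompactly, and such isometries preserve the unbounded function ``distance to $\partial\overline C$'', which is incompatible with compactness of the quotient. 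Hence $\overline C$ is a slab, i.e. isometric to $I\times\RR^{n-1}$ for a compact interval $I$ with the product metric, which is the assertion. I expect the only genuinely delicate point to be the collar computation of the first step — everything after it is soft topology and convex geometry — together with the routine but fiddly check that $\overline C\to N$ is honestly a covering at the boundary.
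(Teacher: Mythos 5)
Your proof is correct and follows essentially the same route as the paper: establish the product collar, double along the boundary, lift to the universal cover $\RR^n$ of the double, note that the preimage of the boundary is a disjoint family of hyperplanes, and identify the closure of a component of the preimage of the interior as a slab that covers $M$ universally. The only real variation is in the final identification — the paper rules out anything but a band by observing that the rank-$n$ lattice of deck translations forces infinitely many parallel hyperplanes, while you use convexity of the component together with cocompactness of the deck action to exclude the half-space — and you supply more detail (the Riccati computation for the collar, the covering check along the boundary) than the paper, which simply asserts these points.
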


\begin{proof}
Let $M$ be such a manifold. In a neighborhood of any component $S$ of the boundary, the metric   can  be written $g_S+dt^2$ ($g_S$ is the induced metric
and $t$ the distance to $S$. Therefore by gluing two copies of $M$ along the boundary, we get a compact flat boundaryless manifold $N$. Let $p:\RR^n\rightarrow N$ its universal Riemannian covering. The inverse image $p^{-1}$ of the boundary is a (disjoint) union of complete totally geodesic hypersurfaces of $\RR^n$ that is to say of parallel hyperplanes.\\

In fact, there are infinitely many such hyperplanes:  indeed, the deck transforms of the
covering  $p:\RR^n\rightarrow N$ act on this family of hyperplanes,
and form a Bieberbach group in dimension $n$, which contains a lattice of rank $n$. Now, $p^{-1}(M)$ is a disjoint union of bands limited by  two such hyperplane. Let $B\simeq I\times \RR^{n-1}$ be one of them. The restriction of $p$ to $B$ gives a universal cover of $M$.
\end{proof}

\bigskip

Now, we study the manifolds themselves. We can suppose for simplicity that $I=[-1,1]$.\\

The fundamental group $\Gamma$  of $M$ , viewed as a sub-group of $\mathrm{Isom}(\mathbb{R}^n)$, 
  leaves the boundary  $\{1\}\times \mathrm{R}^{n-1}\cup  \{-1\}\times \mathrm{R}^{n-1}$ globally invariant and admits
 a compact fundamental  domain. The hyperplane $P=\{0\}\times \mathrm{R}^{n-1}$ is also globally invariant,
 and the restriction to this hyperplane of the action of $\Gamma$ still admits a fundamental domain. Two cases are possible.\\
 %

\begin{enumerate}

\item Each  connected  component of the  boundary of $I\times\RR^{n-1}$ is $\Gamma$-invariant.  Then, any parallel hyperplane
to these components is also $\Gamma$-invariant. In this situation,  $\Gamma$  is isomorphic to its restriction to
$\RR^{n-1}$, which is a $(n-1)$-dimensional Bieberbach group that we denote $\Gamma'$. Then $M$ is isometric to $I\times(\RR^{n-1}/\Gamma')$. Its boundary has two connected components and can be identified with $\RR^{n-1}/\Gamma'\sqcup \RR^{n-1}/\Gamma'$.\\

\item There are elements of $\Gamma$ which exchange the components. Then the subgroup $\Gamma_0$ of $\Gamma$  which preserves these components is a normal sub-group of index $2$. We have the following property, which is trivial in case i).

\end{enumerate}

\begin{lemma} The restriction map $\gamma \mapsto \gamma_{\mid P}$ is injective.
\begin{proof}
 Suppose that $\gamma_{\mid P}$ is the identity map. If $\gamma\in\Gamma_0$, clearly $\gamma=Id_\Gamma$.
 If $\gamma\in\Gamma\setminus\Gamma_0$ then $\gamma$ must be orthogonal reflection with respect to $P$.
 This is impossible, since $\Gamma$ acts freely.
 \end{proof}
\end{lemma}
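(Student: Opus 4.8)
The plan is to show that the kernel of this restriction map — which is a group homomorphism, since $P$ is globally $\Gamma$-invariant — reduces to the identity. The starting point is a purely Euclidean observation: writing $\RR^n=\RR\times\RR^{n-1}$ with $P=\{0\}\times\RR^{n-1}$, any isometry of $\RR^n$ fixing $P$ pointwise fixes the point $0\in P$, hence is linear and lies in $O(n)$, and fixes the hyperplane $P$; its restriction to the orthogonal line is therefore $\pm\mathrm{Id}$. In other words such an isometry has the form $(t,x)\mapsto(\varepsilon t,x)$ with $\varepsilon\in\{-1,+1\}$, i.e. it is either $\mathrm{Id}_{\RR^n}$ or the orthogonal reflection $\sigma$ with respect to $P$. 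So if $\gamma_{\mid P}=\mathrm{Id}_P$, then $\gamma\in\{\mathrm{Id},\sigma\}$, and everything comes down to ruling out $\gamma=\sigma$.

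I would then split according to the two cases of the dichotomy above. If $\gamma\in\Gamma_0$, then by definition $\gamma$ maps each boundary component $\{1\}\times\RR^{n-1}$ and $\{-1\}\times\RR^{n-1}$ to itself; but $\sigma$ interchanges the hyperplanes $\{t=1\}$ and $\{t=-1\}$, so $\gamma\neq\sigma$ and hence $\gamma=\mathrm{Id}$. If instead $\gamma\in\Gamma\setminus\Gamma_0$, then $\gamma$ interchanges the two boundary components, which forces $\gamma=\sigma$; but $\sigma$ fixes every point of $P$, contradicting the freeness of the $\Gamma$-action (the same fact already used to form the closed manifold $N$). In either case the only element of the kernel is the identity, so the restriction map is injective.

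I do not expect a genuine obstacle here: the single point needing care is the elementary normal form of an isometry fixing a hyperplane pointwise, and the only external input is that $\Gamma$ acts freely. Note finally that in case (i) one has $\Gamma_0=\Gamma$, so only the first case occurs and the conclusion is immediate — this is the sense in which the lemma is "trivial in case i)".
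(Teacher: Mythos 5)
Your proof is correct and follows essentially the same route as the paper: determine that an isometry restricting to the identity on $P$ is either the identity or the reflection $\sigma$ in $P$, rule out $\sigma$ for $\gamma\in\Gamma_0$ because it swaps the boundary components, and rule it out for $\gamma\in\Gamma\setminus\Gamma_0$ by freeness of the $\Gamma$-action. You merely spell out the elementary normal-form step that the paper leaves implicit.
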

 
Therefore the restriction to $P$ maps $\Gamma$ and $\Gamma_0$ isomorphically onto
Bieberbach groups in dimension $(n-1)$, that we shall denote $\Gamma^\prime$ and $\Gamma_0^\prime$.

Denoting by  $(x,y)$  the elements of $\RR\times\RR^{n-1}$,
the  actions of $\Gamma $   on $\RR^n$ and of  $\Gamma^\prime$ on $\RR^{n-1}$ are related
as follows 
$$ \gamma\cdot (x,y)=\left\{\begin{array}{rl} (x,\gamma'\cdot y)&\mathrm{si}\ \gamma\in\Gamma_0\\
(-x,\gamma'\cdot y)&\mathrm{si}\ \gamma\notin\Gamma_0\end{array}\right.$$
The boundary of $M$ is connected and isometric to 
$\RR^{n-1}/\Gamma_0'$, and  $M$ admits a Riemannian covering of order $2$
isometric to  $I\times\RR^{n-1}/\Gamma_0$. This situation is exactly what happens in dimension $2$
with the M\"obius band.\\

\textbf{Remark.}  There is of course a strong contrast with the hyperbolic case. In dimension $2$,
if a compact hyperbolic manifold has a (non-empty) geodesic boundary, this boundary has
at least $3$ components. For higher dimension, see \cite{fp}.

\medskip

A consequence of this discussion is a characterisation of connected geodesic boundaries of Bieberbach manifolds.

\begin{theorem}\label{maintheo}
 For a Bieberbach manifold $\RR^{n-1}/\Gamma_0$ the following properties are equivalent.
 \begin{enumerate}
\item It can be realized as the totally geodesic boundary of a compact flat $n$-dimensional manifold.

\item It admits a fixed point free isometric involution.
\item
There is an exact sequence
 $$
1\longrightarrow \Gamma_0\longrightarrow  \Gamma\longrightarrow \mathbb{Z}_2\longrightarrow 1$$
where $\Gamma$ is a $(n-1)$-dimensional Bieberbach group.
 \end{enumerate}
\end{theorem}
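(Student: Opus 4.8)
The plan is to establish the cycle of implications $(1)\Rightarrow(2)\Rightarrow(3)\Rightarrow(1)$, each step being an unwinding of the structural analysis carried out above.

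\smallskip

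For $(1)\Rightarrow(2)$ I would argue as follows. If $N:=\RR^{n-1}/\Gamma_0$ is the totally geodesic boundary of a compact flat $n$-manifold $M$, then, $N$ being connected, we are in case~(ii) of the dichotomy above (case~(i) would produce a disconnected boundary): the deck group $\Gamma$ of the cover $I\times\RR^{n-1}\to M$ contains $\Gamma_0$ with index two, acting through the explicit formula displayed above, and $M$ carries an isometric double cover $I\times N$ whose nontrivial deck transformation is $(x,\bar y)\mapsto(-x,\tau\cdot\bar y)$ for the isometric involution $\tau$ of $N$ induced by any element of $\Gamma\setminus\Gamma_0$ (well defined since such an element normalizes $\Gamma_0$, of order two since its square lies in $\Gamma_0$). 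Since a nontrivial deck transformation of a connected cover is fixed point free, restricting that formula to the slice $\{x=0\}$ forces $\tau$ to have no fixed point, which is what (2) asks for.

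\smallskip

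For $(2)\Rightarrow(3)$: given a fixed point free isometric involution $\tau$ of $N$, I would lift it to an isometry $\widetilde\tau$ of $\RR^{n-1}$ (a lift of an isometry to the universal Riemannian cover is again an isometry), observe that $\widetilde\tau$ normalizes $\Gamma_0$ because $\tau$ descends to the quotient, and set $\Gamma:=\langle\Gamma_0,\widetilde\tau\rangle\subset\mathrm{Isom}(\RR^{n-1})$. Then $\widetilde\tau^{\,2}\in\Gamma_0$ (it covers $\tau^2=\mathrm{id}$) while $\widetilde\tau\notin\Gamma_0$ (as $\tau\neq\mathrm{id}$), so $\Gamma_0$ has index two in $\Gamma$ and we obtain the exact sequence. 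It then remains to check that $\Gamma$ is Bieberbach: it is discrete (it contains the discrete subgroup $\Gamma_0$ with finite index), co-compact (it contains the co-compact $\Gamma_0$), and acts freely — every element of $\Gamma\setminus\Gamma_0$ has the form $\gamma_0\widetilde\tau$ with $\gamma_0\in\Gamma_0$ and therefore induces $\tau$ on $N$, so a fixed point of it in $\RR^{n-1}$ would project to a fixed point of $\tau$, which is impossible, while $\Gamma_0$ acts freely by hypothesis.

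\smallskip

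For $(3)\Rightarrow(1)$: let $\varepsilon\colon\Gamma\to\{\pm1\}$ be the composite of $\Gamma\to\ZZ_2$ with $\ZZ_2\simeq\{\pm1\}$, take $I=[-1,1]$, and let $\Gamma$ act on $I\times\RR^{n-1}$ by $\gamma\cdot(x,y)=(\varepsilon(\gamma)\,x,\,\gamma\cdot y)$, using the given Bieberbach action of $\Gamma$ on $\RR^{n-1}$. This is an isometric action preserving $I\times\RR^{n-1}$, properly discontinuous because the action on the $\RR^{n-1}$-factor is, and free — a fixed point $(x,y)$ would force $\gamma\cdot y=y$, hence $\gamma=\mathrm{id}$ since $\Gamma$ acts freely on $\RR^{n-1}$. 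Thus $M:=(I\times\RR^{n-1})/\Gamma$ is a flat $n$-manifold with totally geodesic boundary, and it is compact since it is covered, via $\Gamma_0\subset\Gamma$, by the compact manifold $I\times(\RR^{n-1}/\Gamma_0)$; its boundary $(\{\pm1\}\times\RR^{n-1})/\Gamma$ is a single copy of $\RR^{n-1}/\Gamma_0=N$, because the two boundary hyperplanes are exchanged by $\Gamma\setminus\Gamma_0$ and each is preserved by $\Gamma_0$ acting in the standard way.

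\smallskip

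All three implications being routine reformulations once the structure of Section~\ref{mainsect} is available, the one place that genuinely uses the hypotheses is the passage between ``$\tau$ has no fixed point on $N$'' and ``$\Gamma$ is torsion free'': it is exactly in the freeness verifications of $(2)\Rightarrow(3)$ and $(3)\Rightarrow(1)$ that a crystallographic extension is upgraded to a Bieberbach one, and where an involution \emph{with} a fixed point would be obstructed. I do not expect any other serious difficulty.
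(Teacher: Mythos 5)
Your proof is correct and follows essentially the same route as the paper: the implication out of (1) rests on the structure theory of Section~\ref{mainsect} (connected boundary forces case (ii), and the induced involution is fixed point free because the deck action is free), and the return to (1) is the paper's own construction of the non-trivial $I$-bundle, merely carried out upstairs on $I\times\RR^{n-1}$ via $\gamma\cdot(x,y)=(\varepsilon(\gamma)x,\gamma\cdot y)$ instead of downstairs on $I\times(\RR^{n-1}/\Gamma_0)$ via $(x,y)\mapsto(-x,\sigma(y))$. The only structural difference is that you run the cycle as $(1)\Rightarrow(2)\Rightarrow(3)\Rightarrow(1)$ rather than the paper's $(1)\Rightarrow(3)\Rightarrow(2)\Rightarrow(1)$, which adds the routine lifting step in $(2)\Rightarrow(3)$ (forming $\Gamma=\langle\Gamma_0,\widetilde{\tau}\rangle$ and checking it is discrete, co-compact and free, hence Bieberbach) that the paper leaves implicit; your verification of that step is sound.
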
 
\begin{proof}

We have just seen that i) implies iii). If iii) is true, then $\Gamma$ normalises $\Gamma_0$
and defines an isometric involution, which is fixed-point free since $\Gamma$ is a Bieberbach group.
Now, let $\sigma$ be such an isometry of
 $\RR^{n-1}/\Gamma_0$. Take the manifold
 $[-1,1]\times \RR^{n-1}/\Gamma_0$ equipped with the product metric.
 The map $(x,y)\mapsto (-x, \sigma(y))$ is a fixed point free isometric involution, and the quotient
 manifold has the required property.
\end{proof}

\begin{rem}\label{involution} This theorem allows to check whether a given Bieberbach $\RR^{n-1}/\Gamma_0$ manifold is (or is not) the geodesic boundary of an $n$-dimensional compact flat  manifold. One must prove (or disprove) the existence
of a $\gamma \in \mathrm{Isom}(\RR^{n-1})$ with the following properties:

\begin{enumerate}
\item  $\gamma$ normalizes $\Gamma_0$ (it must go to the quotient as an isometry);
\item $\gamma^2 \in \Gamma_0$ (involutiveness) ;
\item The sub-group of $\mathrm{Isom}(\RR^{n-1})$ generated by $\gamma$ and $\Gamma_0$ acts freely.

 \end{enumerate}
 
 An involutive fixed points free isometry which is not a translation doubles the order of the holonomy group.
 Therefore, the case where the holonomy group has maximal order in a given dimension is more simple, since it would be enough to only consider the translations $\gamma \in \Lambda/2 \setminus \Lambda$.

\end{rem}

\begin{defi} An isometry of a Bieberbach manifold is called \emph{admissible} 
if it is involutive and does not admit fixed points.

\end{defi}

To summarize this discussion, a compact flat $n$-dimensional  manifold with connected geodesic boundary is a non trivial $I$-bundle
over a $(n-1)$-dimensional Bieberbach manifold (that one could call the \emph{soul}) and the boundary is a two-fold non-trivial covering of the soul.

\section{Two and three dimensional boudaries }\label{examples}
\subsection{Two dimensional boundaries}\label{dim2}
We will study flat manifolds with geodesic boundary in dimension $3$. We follow the notations of the previous section.\\

In the case i), i.e., if each connected component of the boundary of $I\times\RR^{2}$ is stable by $\Gamma$, then $M$ is the product of a flat torus or a flat Klein bottle by a segment.\\

In the case ii), i.e., if the sub-group of $\Gamma$ which keeps stable the connected components of the boundary of $I\times\RR^{2}$ is a sub-group $\Gamma_0$ of index $2$,  then $\Gamma'$ is either a lattice of the Euclidean plane or the fundamental group of a Klein bottle.
\begin{itemize}

\item If $\Gamma'$ is a lattice, then its sub-groups of index $2$ are of the form $\ZZ v+\ZZ v^\prime$ for a suitable basis
$(v,v^\prime)$
of $\Gamma$.
The manifold is the quotient of $[-d,d]\times\RR^2$ by the sub-group of $\mathrm{Isom}(\RR^3)$
generated by $t_v$ and the glide reflection $\sigma_{P,\frac{v^\prime}{2}}$, where $P$ is the plane $x=0$ with $v$ and $v'$ parallel to $P$.
This manifold is not orientable; its boundary is the flat torus
$\RR^2/\ZZ v+\ZZ v^\prime$, whereas the plane $P$ gives in the quotient
the flat torus $\RR^2/\ZZ v+\ZZ \frac{v^\prime}{2}$.
The boundary and the soul are both tori.
The similarity with the M\"obius band is such that we suggest to call this manifold the \emph{solid M\"obius band}. 
We denote it by $TT_{v,v^\prime,d}$. Note that such a manifold exists in any dimension.

\item If $\Gamma'$ is the fundamental group of the Klein bottle $K_{a,b}$, generated by the glide reflection $(y,z)\mapsto (y+\frac{a}{2},-z)$
and the translation $(y,z)\mapsto (y,z+b)$, there are two types of sub-groups of index $2$.\\
\begin{itemize}
 \item

First, $\Gamma_0$ can be the lattice associated to $\Gamma$.
The manifold is then the quotient of $[-d,d]\times\RR^2$ by the sub-group of $\mathrm{Isom}(\RR^3)$
generated by
$t_{(0,0,b)}: (x,y,z)\mapsto (x,y,z+b)$ and $\sigma_{D,(0,\frac{a}{2},0)}:(x,y,z)\mapsto  (-x,y+\frac{a}{2},-z)$
where $D$ is the straight line $x=z=0$. This manifold is orientable, and its boundary is the torus $T_{a,b}$.
The boundary is a rectangular torus and the soul is a Klein bottle. We denote it by $TK_{a,b,d}$.
 
\item

Finally, $\Gamma_0$ can be generated by the glide reflection of displacement
$\frac{a}{2}$ and the translation
$(x,y,z)\mapsto (x,y,z+2b)$.
The manifold is then the quotient of $[-d,d]\times\RR^2$ by the sub-group of $\mathrm{Isom}(\RR^3)$
generated by
$\sigma_{P,(0,0,b)}: (x,y,z)\mapsto (-x,y,z+b)$ and $ \sigma_{P',(0,\frac{a}{2},0)}(x,y,z)\mapsto (x,y+\frac{a}{2},-z)$
where $P$ and $P'$ are respectively the planes $x=0$ et $z=0$. 
This manifold is not orientable; the  boundary and the soul are both  Klein bottles.
We denote it by $KK_{a,b,d} $.

\end{itemize}
\end{itemize}

Therefore, we have three  types of compact flat manifolds with connected geodesic boundary. They can be thought of as 3-dimensional Mobius bands.
The following table summarizes this discussion.\\

\begin{table}[htbp]\renewcommand{\arraystretch}{1.2}
\begin{center}
\begin{tabular}{|c|c|c|}
\hline
 Generators of $\Gamma$ & Orientability & Boundary \\
\hline
$t_v$ and $\sigma_{P,\frac{v^\prime}{2}}$ & non-orientable & $T^2=\RR^2/\ZZ v+\ZZ v^\prime$\\
\hline
$t_{(0,0,b)}$ and $\sigma_{D,(0,\frac{a}{2},0)}$ & orientable & $T_{a,b}$\\
\hline
 $\sigma_{P,(0,0,b)}$ and $\sigma_{P',(0,\frac{a}{2},0)}$ & non-orientable & $K_{a,2b}$\\
\hline
  \end{tabular}\end{center}
\end{table}

By gluing two such manifolds along isometric boundaries, some $3$-dimensional Bieberbach manifolds can be obtained. We will give the details in \ref{recollement} after giving a precise description of these manifolds.

\subsection{Three dimensional boundaries}\label{dim3}
Three dimensional Bieberbach manifold have been classified by Hantzsche and Wendt in 1935 (See \cite{wolf} 
Th. 3.5.5 (orientable case) and Th. 3.5.9 (non orientable case)). 
There exist ten compact and flat manifolds of dimension $3$ up to an affine diffeomorphism, six are orientable and four are non-orientable.

In the orientable case, these types are caracterized by the holonomy group $G$. Besides flat tori, they are as follows
(we use the notations of Thurston, cf. \cite{th}, 4.3.
\begin{itemize}
 \item  
The manifold $C_2$: the group $\Gamma$ is generated by two orthogonal translations $\{a_1,a_2\}$ and a 
glide reflection $\sigma_{L,a_3/2}$, where the line $L$ is orthogonal to $a_1$ and $ a_2 $. Its holonomy group is isomorphic to $\mathbb{Z}_2$.

\item The manifold $C_3$: the group $\Gamma$ is generated by independent translations $\{a_1,a_2\}$ which generate a flat hexagonal lattice and 
a srew motion  $(R,t_{a_3/3})$ of angle $2\pi/3$ around a vector $a_3$ orthogonal to $a_1$ and $a_2$.
 Its holonomy group is isomorphic to $\mathbb{Z}_3$.
\item
The manifold $C_4$: the group $\Gamma$ is generated by two translations $\{a_1,a_2\}$ which generate a flat square lattice and a screw motion
$(R,t_{a_3/4})$ of angle $\pi/2$ around a vector $a_3$ orthogonal to $a_1$ and $a_2$. Its holonomy group is isomorphic to $\mathbb{Z}_4$.

\item The manifold $C_6$: the group $\Gamma$ is generated by two translations $\{a_1,a_2\}$ which generate a flat hexagonal lattice and 
a screw motion  $(R,t_{a_3/6})$ of angle $\pi/3$ around a vector $a_3$ orthogonal to $a_1$ and $a_2$. Its holonomy group is isomorphic to $\mathbb{Z}_6$.

\item The manifold $C_{2,2}$: let $L$, $M$ and $N$ be three disjoint lines pairwise orhogonal. The group
$\Gamma$ is generated by the glide reflections $\sigma_{L, 2d(M,N)}$,  $\sigma_{M, 2d(N,L)}$ and  $\sigma_{N, 2d(L,M)}$
(see the very suggestive figure of \cite{th}, p. 236; in fact two reflections suffice).
Its holonomy group is isomorphic to $\mathbb{Z}_2\times \mathbb{Z}_2$.

\end{itemize}

\begin{prop}\label{orient} Among orientable $3$-dimensional Bieberbach manifolds, only the torus, the manifold $C_2$ and the manifold $C_3$ are the geodesic boundary of a compact flat $4$-dimensional manifold.
\end{prop}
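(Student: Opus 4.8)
The approach is to reduce the statement to the criterion of Theorem \ref{maintheo}, in the convenient form of Remark \ref{involution}: a $3$-dimensional Bieberbach manifold $\RR^3/\Gamma_0$ occurs as the connected totally geodesic boundary of a compact flat $4$-manifold precisely when $\Gamma_0$ sits as an index $2$ subgroup in some $3$-dimensional Bieberbach group $\Gamma$ --- equivalently, when $\RR^3/\Gamma_0$ carries an admissible isometry. The proposition thus splits into a positive part (for the torus, $C_2$ and $C_3$) and a negative part (for $C_4$, $C_6$ and $C_{2,2}$).

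For the positive part, on a flat torus $\RR^3/\Lambda$ translation by any non-trivial class of $\tfrac12\Lambda/\Lambda$ is admissible. For $C_2$ and $C_3$ the conclusion is a special case of the results of Section~4 (holonomy $\ZZ_2$, resp.\ $\ZZ_3$); alternatively one exhibits admissible translations directly --- $t_{a_1/2}$ for $C_2$, with $a_1$ orthogonal to the axis of the generating screw motion, and $t_{a_3/2}$ for $C_3$, with $a_3$ along that axis. In each case the three conditions of Remark \ref{involution} reduce to short computations: the translation normalises $\Gamma_0$ (it commutes with the screw motion, or conjugates it into $\Gamma_0$ by a lattice translation), its square lies in the lattice, and the enlarged group still acts freely because every element with non-trivial rotational part keeps a non-zero translational component along the rotation axis.

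For the negative part, suppose $\RR^3/\Gamma_0$ is $C_4$, $C_6$ or $C_{2,2}$ and admits an admissible isometry; lifting it to $\RR^3$ and adjoining it to $\Gamma_0$ yields an index $2$ Bieberbach overgroup $\Gamma$. By the Hantzsche--Wendt classification (\cite{wolf}), the holonomy group of a $3$-dimensional Bieberbach group has order at most $6$; since the holonomy of $C_4$, $C_6$, $C_{2,2}$ already has order $4$, $6$, $4$ respectively, the linear part of the new generator must lie in the holonomy of $\Gamma_0$ --- otherwise $\Gamma$ would have holonomy of order $\geq 8$. Multiplying the generator by an element of $\Gamma_0$ with the same linear part, we may then assume it is a translation $t_u$, so that $\Gamma=\langle\Gamma_0,t_u\rangle$ with $u\in\tfrac12\Lambda_0\setminus\Lambda_0$, subject to the normalisation constraint $(\rho-\mathrm{Id})\,u\in\Lambda_0$ for every $\rho$ in the holonomy.

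It then remains to run through the finitely many classes of such $u$ modulo $\Lambda_0$ and to verify that $\langle\Gamma_0,t_u\rangle$ always contains a rotation with an invariant line, hence an element of finite order, contradicting that $\Gamma$ acts freely. For $C_6$ this is quick: with $R$ the rotation of angle $\pi/3$, $\mathrm{Id}-R$ is an automorphism of the hexagonal lattice, so the only class surviving the normalisation test is $u=a_3/2$, and the third power of the screw motion --- whose rotation angle is then $\pi$ --- acquires a fixed line in the extension. For $C_{2,2}$ every class in $\tfrac12\Lambda_0/\Lambda_0$ survives that test, and each non-trivial one resurrects one of the three half-turns in the holonomy. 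The delicate case, and the place where essentially all the work of the proposition lies, is $C_4$: here $\mathrm{Id}-R$ has determinant $2$ on the plane orthogonal to the axis, so strictly more candidate classes of $u$ survive the normalisation test, and each has to be examined separately; carrying out and organising this verification is the main obstacle.
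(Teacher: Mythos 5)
Your positive part and your treatment of $C_6$ and $C_{2,2}$ follow the paper's route: admissible translations for the torus, $C_2$ and $C_3$, and, for the negative part, the reduction (via the bound on the order of a $3$-dimensional holonomy group) to candidate translations $t_u$ with $u\in\Lambda/2\setminus\Lambda$ satisfying $(\mathrm{Id}-\rho)u\in\Lambda$ for all $\rho$ in the holonomy --- this is exactly Remark \ref{involution}, and your computations for $C_6$ (unimodularity of $\mathrm{Id}-R$ on the hexagonal lattice, so only $u\equiv a_3/2$ survives and then $\alpha^3t_u^{-1}$ is a half-turn) and for $C_{2,2}$ (every nontrivial class cancels the axial part of one of the three generators) are correct.

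The genuine gap is the case you single out and then leave open: $C_4$. This is not a deferrable routine check, because when one carries it out the intended conclusion fails. Write $\alpha=(R,t_{a_3/4})$ for the screw motion and let $a_1,a_2$ span the square lattice orthogonal to the axis. Since $(\mathrm{Id}-R)\tfrac{a_1+a_2}{2}=a_1\in\Lambda$, the diagonal class $u\equiv\tfrac{a_1+a_2}{2}$ (and also $u\equiv\tfrac{a_1+a_2+a_3}{2}$) passes the normalisation test, not only $u\equiv a_3/2$. For $u=\tfrac{a_1+a_2}{2}$ all three conditions of Remark \ref{involution} hold: $t_u$ normalises $\Gamma_0$, $t_u^2=t_{a_1+a_2}\in\Lambda$, and $\langle\Gamma_0,t_u\rangle$ acts freely --- indeed $u$ has no component along the axis, so every element of $\Gamma_0 t_u$ with rotational part $R^k$, $k=1,2,3$, has translation part whose axial component is $k a_3/4$ modulo $a_3$, hence nonzero, while $\mathrm{Im}(\mathrm{Id}-R^k)$ is the plane orthogonal to the axis; thus no such element has a fixed point. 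The group $\langle\Gamma_0,t_u\rangle$ is therefore a torsion-free crystallographic group with translation lattice $\Lambda+\ZZ u$ and holonomy $\ZZ_4$, i.e.\ again of type $C_4$ (square lattice spanned by $\tfrac{a_1\pm a_2}{2}$), containing $\Gamma_0$ with index $2$. So the verification you postpone cannot end the way you expect: by Theorem \ref{maintheo} this class would exhibit $C_4$ as the geodesic boundary of the non-trivial $I$-bundle over the smaller $C_4$, contradicting the statement you are proving. (The paper's own proof only tests $t_{a_3/2}$ for $C_4$ and does not address the diagonal class either; the body-centred class $u\equiv\tfrac{a_1+a_2+a_3}{2}$ does fail, since $\alpha^2t_{u-a_3}$ is then a half-turn with fixed points.) You therefore need either an explicit argument excluding $u\equiv\tfrac{a_1+a_2}{2}$ --- which I do not see, and which you should test against the computation above --- or an acknowledgement that the exclusion of $C_4$ cannot be established along these lines.
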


\begin{proof}

For the manifold $C_2$, the translation $a_1/2$ (and also $a_2/2$) goes to the quotient and does not admit fixed points. 
It is therefore a geodesic boundary. Note that $C_2$ covers $C_4$ if the corresponding $\{a_1,a_2\}$ lattice is square. 
The manifold $C_3$ is also a geodesic boundary since it always covers the manifold $C_6$.
Also, the translation  $a_3/2$ goes to the quotient and does not admit fixed points (this is a particular
case of Theorem \ref{impair}) .\\

The manifolds $C_4$, $C_6$ and $C_{2,2}$ do not admit any admissible involution.
Indeed, their holonomy group, viewed as a sub-group of $O(3)$, is maximal.
Therefore, an isometric involution must be given by an element of $\Lambda/2\setminus \Lambda$.
For $C_4$ and $C_6$, such an involution is given by $t_{a_3/2}$ but it has fixed points.

The situation is the same for $C_{2,2}$: the sub-group of $\mathrm{Isom}(\RR^3)$ generated by
$\Gamma$ together with a translation in $\Lambda/2\setminus\Lambda$ has fixed points.

\end{proof}

\smallskip

\begin{rem}.
We have seen in the proof of the previous proposition (prop. \ref{orient}) that there are sometimes two ways to obtain the manifolds $C_2$ and $C_3$ as totally geodesic boundaries.
In fact, the manifold $C_2$ (if the corresponding lattice is orthogonal) and the manifold $C_3$
 admit two types of admissible isometries. The first type belongs to the neutral component of the isometry group. The second type, i.e., fixed point free involutions which do not belong to the neutral component of the isometry group, give nice examples of compact $4$-dimensional manifolds with connected geodesic boundary, namely one with soul $C_4$ and boundary $C_2$, and one with soul
$C_6$ and boundary $C_3$.
\end{rem}

Now, we study the four types of non-orientable Bieberbach manifolds in dimension three. Each one of them can be obtained with a suspension of
the Klein bottle (cf. \cite{elaf08}  for  details). They are as follows (we  use the notations of Wolf, cf. \cite{wolf}, ch.3)
as we did in \cite{elaf08}

\begin{itemize}  

\item The manifold $B_1$: the group $\Gamma$ is generated by two orthogonal translations $\{a_1,a_2\}$  and a glide 
reflection  $\sigma_{P,a_3/2}$ with respect to a plane $P$ generated by $\{a_2,a_3\}$ and where $a_3$ is orthogonal 
to $a_1$. Its holonomy group is isomorphic to $\mathbb{Z}_2$.\\
The lattice $\Lambda$ corresponding to $B_1$ is generated by the basis $\{a_1,a_2,a_3\}$. 

\item The manifold $B_2$: the group $\Gamma$ is generated by two glide reflections $\sigma_{P_1,a_1/2}$ and $\sigma_{P_2,a_2/2}$ with respect to
two parallel planes $P_1$ and $P_2$, where $a_1$ and $a_2$ are linearly independent vectors parallel to these planes.
Its holonomy group is isomorphic to $\mathbb{Z}_2$.
The lattice $\Lambda$ corresponding to $B_2$ is generated by $\{a_1,a_2,a_3\}$ where $a_3=\frac{a_1+a_2}{2}+2d$ and 
$d$ is the vector (orthogonal to $P_i$) that sends $P_1$ to $P_2$. 

\item The manifold $B_3$:
$\Gamma$ is generated by a translation $a_3$, a glide reflection $\sigma_{P,a_1/2}$ with respect to
 a plane $P$ perpendicular to $a_3$ and generated by two orthogonal vectors $\{a_1,a_2\}$ and a screw motion $(R,t_{a_2/2})$
of angle $\pi$ around a straight line $L \subset P$ and parallel to $a_2$. Its holonomy group is isomorphic to $\mathbb{Z}_2\times \mathbb{Z}_2$.

\item The manifold $B_4$: the group $\Gamma$ is generated by a glide reflection $\sigma_{P,a_1/2}$ with respect to a plane $P$ generated by two orthogonal vectors $\{a_1,a_2\}$, a screw motion $(R,t_{a_2/2})$ of angle $\pi$ around a straight line $L$ parallel to $a_2$ but not contained in $P$. Its holonomy group is isomorphic to $\mathbb{Z}_2\times \mathbb{Z}_2$. \\
The lattice $\Lambda$ corresponding to $B_4$ is generated by the orthogonal basis$\{a_1,a_2,a_3\}$ where $|a_3|=4dist(P,L)$.

\end{itemize}

It should be noted that the orientable cover of $B_1$ and $B_2$ is a torus,
whereas the orientable cover of 
$B_3$ and $B_4$ is $C_2$ (with an orthogonal lattice).

\begin{prop}\label{nonorient} Every non-orientable $3$-dimensional Bieberbach manifolds is the geodesic boundary of a compact flat $4$-dimensional manifold.
\end{prop}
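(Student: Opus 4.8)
The plan is to invoke Remark \ref{involution}: to realize a Bieberbach manifold $\RR^{3}/\Gamma$ as the geodesic boundary of a compact flat $4$-manifold it suffices to produce an isometry $\gamma\in\mathrm{Isom}(\RR^{3})$ that normalizes $\Gamma$, satisfies $\gamma^{2}\in\Gamma$ but $\gamma\notin\Gamma$, and such that $\langle\Gamma,\gamma\rangle$ acts freely; Theorem \ref{maintheo} then supplies the $4$-manifold. So I would go through $B_1,B_2,B_3,B_4$ one at a time and exhibit such a $\gamma$. Guided by the last paragraph of Remark \ref{involution}, I would look for $\gamma$ among the half-lattice translations $t_v$ with $v\in\tfrac{1}{2}\Lambda\setminus\Lambda$ (for $B_3$ and $B_4$, whose holonomy $\ZZ_2\times\ZZ_2$ is maximal, this is forced; for $B_1$ and $B_2$ it will turn out to be possible anyway).

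The four candidates I would try are: for $B_1$, $\gamma=t_{a_1/2}$, where $a_1$ is orthogonal to the glide plane $P=\mathrm{span}(a_2,a_3)$; for $B_2$, $\gamma=t_{2d}$, i.e.\ translation by the component of $a_3$ perpendicular to the two parallel planes (equivalently $\gamma=t_{(a_1+a_2)/2}$, since the two differ by $t_{\pm a_3}\in\Gamma$); and for $B_3$ and $B_4$, $\gamma=t_{a_3/2}$, where $a_3$ is orthogonal to the glide plane $P=\mathrm{span}(a_1,a_2)$ — for $B_3$ one first notes that the holonomy having order $4$ forces $\Lambda=\langle a_1,a_2,a_3\rangle$. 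In each case $v\notin\Lambda$ while $2v\in\Lambda$, so $\gamma^{2}=t_{2v}\in\Lambda\subset\Gamma$ and $\gamma\notin\Gamma$ are immediate. To see that $\gamma$ normalizes $\Gamma$ I would check it generator by generator: $\gamma$ commutes with all translations of $\Gamma$, and for each glide reflection or $\pi$-screw motion $h\in\Gamma$ whose linear part reverses the direction $v$, a one-line computation gives $\gamma h\gamma^{-1}=t_{w}h$ with $w\in\Lambda$ (explicitly $w=a_1$ for $B_1$, $w=\pm a_3$ for $B_3$ and $B_4$, and $w=\pm 4d$ for $B_2$), hence $\gamma h\gamma^{-1}\in\Gamma$.

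The real work is the freeness of $G:=\langle\Gamma,\gamma\rangle$. Since we only adjoined a translation, the point group of $G$ equals that of $\Gamma$ and the translation lattice of $G$ is $\Lambda^{+}=\Lambda+\ZZ v$; hence every element of $G$ has the form $h_0\,t_w$ with $w\in\Lambda^{+}$ and $h_0$ a fixed representative of its linear part (the identity, a glide reflection in $P$, a $\pi$-screw motion, or — for $B_3$ and $B_4$ — a reflection in a plane orthogonal to $P$). An affine involution $x\mapsto Ax+b$ is fixed-point free exactly when the component of $b$ in $\ker(A-\mathrm{Id})$ does not vanish; writing $h_0 t_w$ in that form, this component always contains one term with a half-integer coefficient in suitable coordinates, so it is nonzero (for $B_1,B_3,B_4$ the glide part of any reflection-type element keeps an $a_1$- or $a_2$-coefficient of the form $\beta+\tfrac{1}{2}$; for $B_2$ it reduces to the fact that $a_1/2\notin\Lambda^{+}\cap P=\langle a_1,a_2,(a_1+a_2)/2\rangle$). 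I expect this freeness step to be the main obstacle, and its subtle case is $B_2$: the naive choice $\gamma=t_{a_1/2}$ does normalize $\Gamma$ and is involutive modulo $\Gamma$, but then $G$ contains $\sigma_{P_1,a_1/2}\,t_{-a_1/2-4d}$, which equals the plain reflection in the plane $P_1+2d$ and has a whole plane of fixed points — so one must instead use the ``diagonal'' translation $t_{2d}$, which exactly avoids this. Once freeness is checked for all four $B_i$, Remark \ref{involution} (equivalently Theorem \ref{maintheo}) finishes the proof.
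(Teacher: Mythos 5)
Your proposal is correct and follows essentially the same route as the paper: you exhibit exactly the admissible translations the paper uses ($t_{a_1/2}$ for $B_1$, $t_{2d}\equiv t_{(a_1+a_2)/2}$ for $B_2$, $t_{a_3/2}$ for $B_3$ and $B_4$) and then invoke Theorem \ref{maintheo} via Remark \ref{involution}. Your additional verification of normalization and freeness (including the observation that the naive choice $t_{a_1/2}$ fails for $B_2$) only fills in details the paper leaves implicit.
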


\begin{proof}

The manifold $B_1$ admits a self-cover: the translation $a_1/2$ goes to the quotient and does not admit fixed points. It also covers the manifolds $B_3$ and $B_4$ if the lattice $\Lambda$ is orthogonal. The manifold $B_2$ also admits an admissible isometry: the translation $2d$ (and also the translation $\frac{a_1+a_2}{2}$, see section \ref{Z2} below) goes to the quotient and does not admit fixed points. Note that the quotient of $B_2$ by the translation $2d$ is a manifold of type $B_1$.\\

The manifolds $B_3$ admits a self-cover: the translation $a_3/2$ goes to the quotient and does not admit fixed points. The manifold $B_4$ also admits an admissible isometry: the translation $a_3/2$ goes to the quotient and does not admit fixed points. Note that the quotient of $B_4$ by the translation $a_3/2$ is a manifold of type $B_3$.

\end{proof}

As a consequence of that discussion, we have the following

\begin{cor}\label{list4} Up to diffeomorphisms, there are fourteen types of compact flat $4$-\ with connected boundary. Namely
 
 \begin{itemize}
  \item Self-coverings: $(T^3,T^3),\,(C_2,C_2),\, (B_1,B_1),\,(B_3,B_3) $;
  \item orientation coverings:  $(T^3,B_1),\,(T^3,B_2),\,(C_2,B_3),\,(C_2,B_4)$;
  \item misceallenous: $(T^3,C_2),\,(C_2,C_4),\,(C_3,C_6),\,(B_1,B_3)$,
  
  $(B_1,B_4), \, (B_2,B_1),\,(B_4,B_3)$
 \end{itemize}
(in that list have denoted by $(A,B)$ the total  space of a  $\ZZ_2$-fibration of $A$ over $B$).
Morever, if two such manifolds are diffeomorphic, they are affinely diffeomorphic. 
\end{cor}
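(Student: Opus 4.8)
\textbf{Proof plan for Corollary \ref{list4}.}
The plan is to assemble the list of 4-dimensional compact flat manifolds with connected geodesic boundary directly from Theorem \ref{maintheo} together with the enumeration carried out in Propositions \ref{orient} and \ref{nonorient}. By Theorem \ref{maintheo} and the summary preceding Section \ref{examples}, such a manifold is determined (up to the parameter giving the width of the $I$-bundle, which does not affect the diffeomorphism type) by a pair $(A, M')$ where $M'$ is a 3-dimensional Bieberbach manifold ("the soul") that admits an admissible isometry $\sigma$, and $A = \RR^3/\Gamma_0$ is the resulting connected boundary, a non-trivial $\ZZ_2$-cover of $M'$. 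So the first step is to go through the ten types of 3-dimensional Bieberbach manifolds and, for each admissible involution found in the proofs of Propositions \ref{orient} and \ref{nonorient}, identify the diffeomorphism type of the corresponding $\ZZ_2$-cover $A$. This is where the bulk of the work lies: for each soul one must write down the group $\Gamma$ generated by $\Gamma_0$ and the admissible $\gamma$, then recognise $\Gamma_0$ (the index-2 subgroup preserving the two boundary components — here the subgroup fixing each sheet of the double cover) in the Hantzsche--Wendt list.

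Second, I would organise the resulting pairs into the three families named in the statement. The \emph{self-coverings} are those where $\sigma$ is (conjugate to) a translation in $\Lambda/2 \setminus \Lambda$, so that $A$ and $M'$ have the same holonomy group and are in fact the same manifold: these are $T^3$ covering $T^3$ (any half-period translation), $C_2$ covering $C_2$ (the translation $a_1/2$, as in the proof of \ref{orient}), $B_1$ covering $B_1$ ($a_1/2$), and $B_3$ covering $B_3$ ($a_3/2$). The \emph{orientation coverings} are those where $M'$ is non-orientable, $A$ is its orientation double cover, and $\sigma$ is an orientation-reversing admissible isometry; from the remark that the orientable covers of $B_1, B_2$ are tori and those of $B_3, B_4$ are $C_2$, these are $T^3 \to B_1$, $T^3 \to B_2$, $C_2 \to B_3$, $C_2 \to B_4$. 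The remaining \emph{miscellaneous} cases come from the admissible isometries that are neither translations mod $\Lambda$ nor orientation-reversing in the above sense: $T^3 \to C_2$ (the glide reflection $\sigma_{P, v'/2}$ of the solid Möbius band $TT_{v,v',d}$), $C_2 \to C_4$ and $C_3 \to C_6$ (the non-neutral-component admissible involutions discussed in the remark after \ref{orient}), and $B_1 \to B_3$, $B_1 \to B_4$, $B_2 \to B_1$, $B_4 \to B_3$, read off from the quotient computations at the end of the proof of \ref{nonorient} (e.g.\ the quotient of $B_4$ by $a_3/2$ is of type $B_3$, so $B_4$ is a $\ZZ_2$-cover of $B_3$; the quotient of $B_2$ by $2d$ is of type $B_1$, so $B_2$ covers $B_1$). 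One must also check that no soul carries a \emph{further} admissible isometry yielding a pair not already on the list; since $C_4, C_6, C_{2,2}$ carry none at all (Prop.\ \ref{orient}) and the admissible isometries of the others were enumerated in the two propositions, the list of fourteen is complete.

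Third, for the final clause — that diffeomorphic manifolds in the list are affinely diffeomorphic — I would double the manifold along its boundary to obtain a closed flat 4-manifold and invoke the Bieberbach rigidity theorem (Theorem i.1(ii)): a diffeomorphism of closed flat manifolds is homotopic to an affine one because it induces an isomorphism of fundamental groups, hence of the Bieberbach groups, hence (by Bieberbach) an affine conjugacy. One then has to argue that this affine equivalence of the doubles restricts to (or can be promoted to) an affine equivalence of the manifolds-with-boundary, using that the boundary is intrinsically characterised inside the double (it is the fixed locus of the canonical reflection, or the unique separating totally geodesic hypersurface of the relevant type).

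The main obstacle I expect is the bookkeeping in the first two steps: correctly identifying, for each of the several admissible isometries, which of the ten Hantzsche--Wendt types the boundary group $\Gamma_0$ is — in particular distinguishing $B_3$ from $B_4$ and keeping track of when a soul admits two inequivalent admissible isometries (as $C_2$ and $C_3$ do) so that it appears more than once as a soul, while making sure one does not double-count a pair $(A, M')$ that arises from two different choices of $\sigma$. A secondary subtlety is the dependence on lattice parameters: some coverings (e.g.\ $B_1 \to B_3$, $B_1 \to B_4$, $C_2 \to C_4$) require the lattice $\Lambda$ to be of a special shape (orthogonal, square), and one must confirm that such special flat structures do exist on the soul so that each listed pair is genuinely realised.
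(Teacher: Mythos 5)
Your overall strategy is the same as the paper's: by Theorem \ref{maintheo} the classification reduces to listing, for each possible boundary $\RR^3/\Gamma_0$, the admissible involutions (equivalently the index-two torsion-free extensions $\Gamma_0\subset\Gamma$), most of which are exhibited in Propositions \ref{orient} and \ref{nonorient}, and then to identifying the resulting pairs (boundary, soul). Your handling of the final clause (double along the boundary, invoke Bieberbach rigidity, restrict) is a reasonable way to supply an argument the paper does not even sketch, although the restriction step you flag is genuinely nontrivial.

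The genuine gap is the completeness step. You assert that ``the admissible isometries of the others were enumerated in the two propositions'', but they were not: those propositions only exhibit \emph{enough} involutions to prove that each manifold is a boundary, and the paper's proof rests precisely on a further systematic application of Remark \ref{involution}, which is what produces the involutions not met before --- glide reflections with respect to a \emph{line} (i.e.\ fixed-point-free screw motions) on $T^3$ and on $B_1$, responsible for the pairs $(T^3,C_2)$, $(B_1,B_3)$, $(B_1,B_4)$. Your attribution of $(T^3,C_2)$ to the glide reflection $\sigma_{P,v'/2}$ of the solid M\"obius band is incorrect: that isometry acts on the boundary torus as a translation, so its soul is again a torus; to obtain soul $C_2$ one needs a screw motion on $T^3$. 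Note also that Proposition \ref{orient} only excludes $C_4$, $C_6$, $C_{2,2}$ as \emph{boundaries}, not as souls (indeed $C_4$ and $C_6$ occur as souls in the list), so it cannot by itself close an enumeration organised by souls; what is really needed is, for each of the ten soul types, a classification of the index-two subgroups of its Bieberbach group up to equivalence, or dually, for each admissible boundary, a classification of all isometries satisfying the three conditions of Remark \ref{involution}. Finally, your organising principle ``$\sigma$ a translation in $\Lambda/2\setminus\Lambda$ $\Rightarrow$ boundary and soul of the same type'' is contradicted by $(B_2,B_1)$ on your own list, and, applied to the admissible translation $a_3/2$ on $C_3$ which you cite from Proposition \ref{orient}, it yields a pair whose soul is of type $C_3$ and which does not occur among the listed pairs; the proposal neither carries out the systematic enumeration nor reconciles such discrepancies (nor remarks that the displayed list contains fifteen pairs against the stated fourteen). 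So the central assertion of the corollary --- that the given list is complete --- is not established by the plan as written.
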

\begin{proof}
Many cases have already been covered. A systematic use of remark \ref{involution} gives the
admissible involutions we did not encountered yet. They concern $T^3$ and $B_1$, and are
suitable glide reflections with respect to a line.

\end{proof}

\subsection{Gluing along the boundary}\label{recollement}
It is well known that the Klein bottle can be realized by gluing two isometric M\"obius bands  along their boundary. A similar
phenomena occurs in higher dimension. 
\begin{itemize}
 \item 
  $B_1$ can be obtained by gluing two copies of  $TT_{v,v^\prime,d}$;
 \item 
 $B_2$ can be obtained by gluing $TT_{v,v^\prime,d}$ and $TT_{v^\prime,v,d}$; 
 \item 
 $B_3$ can be obtained by gluing two copies of $KK_{a,b,d}$;
 \item
  $B_4$ can be obtained by gluing $TT_{a,b,d}$ and $TK_{a,b,d}$;
  \item
  $C_2$ can be obtained by gluing two copies of $TK_{a,b,d}$;
  \item
 $C_{2,2}$ can be obtained by gluing $TK_{a,b,d}$ and $TK_{b,a,d}$.

\end{itemize}
Te see that, we proceed backward: we check that $B_1$ and $B_2$ admit totally geodesic foliations by
tori, with 2 
exceptional leaves which are tori of ``half-size''.  
Indeed, both fundamental groups contain glide reflections. Foliate $\RR^3$ by parallel planes to
the reflection plans 
and go to the quotient.

Concerning $B_3$, the same procedure gives a foliation by Klein bottles, with two exceptional leaves which are Klein bottles
of half size.

The procedure is still the same for $B_4$, but the situation is more involved: we also obtain a foliation whose generic leaves are tori,
but there are 2 exceptional leaves, namely a (half-size) torus
and a Klein bottle.

Concerning $C_{2,2}$, we just foliate $\RR^3$ with orthogonal planes to the axis of a screw motion of the
fundamental group and  go to the quotient. This time we get a foliation  whose generic leaves  are tori, with
$2$ exceptional leaves which are (generally non isometric) Klein  bottles.

\section{Bieberbach manifolds with particular Holonomy groups} In this section, we will give a general result for Bieberbach manifolds with a holonomy group isomorphic to $\ZZ_{2m+1}$ or $\ZZ_2$: they all can be realised as the geodesic boundary of a compact flat manifold. 

\subsection{Manifolds with cyclic Holonomy group of odd order}

\begin{theorem}\label{impair} Every Bieberbach manifolds with a cyclic holonomy group of odd order is the totally geodesic boundary of a compact flat manifold.

\end{theorem}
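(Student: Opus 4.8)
The plan is to use the criterion of Remark \ref{involution}: given a Bieberbach manifold $\RR^{n}/\Gamma_0$ with holonomy group $G \cong \ZZ_{2m+1}$, I must exhibit an admissible isometry, i.e.\ an element $\gamma \in \mathrm{Isom}(\RR^{n})$ which normalizes $\Gamma_0$, satisfies $\gamma^2 \in \Gamma_0$, and such that $\langle \Gamma_0, \gamma\rangle$ acts freely. The natural candidate is a translation $t_w$ by a half-lattice vector $w$ with $2w \in \Lambda$ but $w \notin \Lambda$, where $\Lambda \subset \Gamma_0$ is the translation lattice of finite index $2m+1$. Such a $\gamma = t_w$ automatically normalizes $\Gamma_0$ (translations are normalized by any isometry whose linear part preserves $\Lambda$, and more to the point $t_w \Gamma_0 t_w^{-1}$ has the same rotational parts and the same translation lattice, so it differs from $\Gamma_0$ only possibly in the translational components of the non-translation elements — I must check these are unchanged, which follows because conjugating $(\,A \mid a\,)$ by $t_w$ gives $(\,A \mid a + (I-A)w\,)$, and I will choose $w$ fixed by the holonomy so that $(I-A)w \in \Lambda$, in fact $=0$ on the relevant component).

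The key structural input is the splitting $\RR^{n} = V \oplus V^{\perp}$, where $V$ is the subspace fixed pointwise by the holonomy representation $G \to O(n)$ and $V^{\perp}$ is its orthogonal complement; since $G$ is finite this is $G$-invariant, and since $|G| = 2m+1$ is odd, the restriction of $G$ to $V^{\perp}$ has no eigenvalue $1$, so $I - A$ is invertible on $V^{\perp}$ for every $A$ in the image of a generator. First I would recall (this is the standard structure of Bieberbach groups with a given holonomy, cf.\ the Bieberbach Theorem and \cite{charlap}) that by an affine change of coordinates every $\gamma \in \Gamma_0$ can be written $(\,A \mid a\,)$ with $a = a_V + a_{V^\perp}$, and the screw-motion decomposition shows that the $V^{\perp}$-part of any element is ``pure rotation up to conjugation'': precisely, there is a point of $V^\perp$ fixed by the whole $G$-action in the sense that, after translating, the non-translation generators act on $V^{\perp}$ as genuine linear maps. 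Consequently the action of $\Gamma_0$ on $V$ is by a Bieberbach (indeed crystallographic, hence lattice) group $\Lambda_V$ of rank $d = \dim V$, and $\Gamma_0 \hookrightarrow \Lambda_V \times (\text{action on } V^\perp)$ compatibly.

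Then I would produce $w$: choose $0 \neq v \in V$ with $2v \in \Lambda_V$, $v \notin \Lambda_V$ (possible since $\Lambda_V$ has rank $\geq 1$ — note $d \geq 1$ because $\Gamma_0$ contains a rank-$n$ lattice and $G$ acts on it, so the fixed subspace of the rational representation is nonzero), and set $\gamma = t_v$. Since $v \in V$ we have $(I - A)v = 0$ for every holonomy element, so conjugation by $t_v$ fixes every element of $\Gamma_0$ exactly: $t_v$ normalizes $\Gamma_0$, and trivially $t_v^2 = t_{2v} \in \Lambda \subseteq \Gamma_0$. The only real work is freeness of $\langle \Gamma_0, t_v\rangle$: an element of this group not in $\Gamma_0$ has the form $t_v \gamma$ with $\gamma = (\,A \mid a\,) \in \Gamma_0$, i.e.\ $(\,A \mid a + v\,)$; a fixed point would force $A$ to have eigenvalue $1$ with $a + v$ in the corresponding direction, i.e.\ projecting to $V$: $a_V + v$ must be a fixed vector of $A|_V = \mathrm{id}$, so the $V$-component reads ``$p = p + a_V + v$'' which is impossible unless $a_V + v = 0$; but $a_V \in \Lambda_V$ (the $V$-action is by a lattice) and $v \notin \Lambda_V$, so $a_V + v \neq 0$. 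Hence no fixed points, the group is Bieberbach, and $\RR^{n}/\Gamma_0$ is the geodesic boundary of $[-1,1]\times \RR^{n}/\langle\Gamma_0, t_v\rangle$ by Theorem \ref{maintheo}.

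The main obstacle is establishing cleanly that the $V$-action of $\Gamma_0$ is by a full-rank lattice with $d \geq 1$ and that the non-translation elements contribute nothing in the $V$-direction (so that $a_V$ always lies in $\Lambda_V$); this is where the oddness of $|G|$ is essential — it is exactly what makes $I - A$ invertible on $V^\perp$, hence what makes the $V^\perp$-part of each element a ``pure rotation'' and isolates all the translational freedom in $V$. I would handle this by the averaging argument: for a non-translation $\gamma = (\,A\mid a\,)$, the point $q = -\tfrac{1}{|G|}\sum_{k} A^k(\text{partial sums})$ — more simply, on $V^\perp$ the map $x \mapsto Ax + a_{V^\perp}$ has the unique fixed point $(I-A)^{-1}a_{V^\perp}$ — lets me conjugate away $a_{V^\perp}$, after which the surviving translation part lies in $V$ and, being forced to normalize together with the lattice, lies in $\Lambda_V$. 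Everything else is the bookkeeping above.
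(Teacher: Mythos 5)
Your overall strategy (find an admissible translation along the holonomy-fixed subspace $V$ and invoke Theorem \ref{maintheo}) is the same as the paper's, but there is a genuine gap at the involutiveness step. The group you call $\Lambda_V$ is the image of the homomorphism $\Gamma_0\to V$ sending $(A\mid a)$ to the $V$-component of $a$; it contains the $V$-components of the \emph{screw motions}, not only of genuine translations, so in general $\Lambda_V$ is strictly larger than $\Lambda\cap V$ and is \emph{not} contained in $\Gamma_0$. Consequently, from your choice ``$2v\in\Lambda_V$, $v\notin\Lambda_V$'' the assertion ``trivially $t_v^2=t_{2v}\in\Lambda\subseteq\Gamma_0$'' does not follow. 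Concretely, for $C_3$ one has $V=\RR a_3$, $\Lambda\cap V=\ZZ a_3$ but $\Lambda_V=\ZZ\,a_3/3$ (the screw part contributes $a_3/3$); your conditions allow $v=a_3/6$, and then $t_v$ induces on $C_3$ an isometry of order $6$, not an involution. Worse, nothing in your argument uses oddness where it matters: the invertibility of $I-A$ on $V^{\perp}$ holds for the generator by the very definition of $V$, odd or even. Applied verbatim to $C_4$ (where $\Lambda_V=\ZZ\,a_3/4$, so $v=a_3/8$ meets your conditions) the same recipe would ``prove'' that $C_4$ is a geodesic boundary, contradicting Proposition \ref{orient}; this confirms the step $t_v^2\in\Gamma_0$ is where the proof breaks. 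The repair is to require $2v\in\Lambda$ (hence $2v\in\Lambda\cap V$) together with $v\notin\Lambda_V$, and the existence of such a $v$ is precisely where oddness of $N=|G|$ enters: every class in $\Lambda_V/(\Lambda\cap V)$ is killed by $N^2$ (take $N$-th powers twice), so this quotient has odd order and $\tfrac12(\Lambda\cap V)\not\subset\Lambda_V$. The paper gets this more directly: a lift $\gamma$ of the generator has $\gamma^N=t_a$ with $0\neq a\in\Lambda\cap V$, and one halves $a$, so $2v=a\in\Lambda$ is automatic and only the fixed-point-freeness of $t_{a/2}$ uses oddness.

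A secondary flaw: your justification that $d=\dim V\geq 1$ (``$G$ acts on a rank-$n$ lattice, so the fixed subspace of the rational representation is nonzero'') is not a valid implication — $\ZZ_3$ rotating the hexagonal lattice has zero fixed subspace. The correct reason, and the one the paper uses, is torsion-freeness/freeness of the action: a lift $\gamma=(A\mid c)$ of the generator has no fixed point, so $I-A$ cannot be invertible and $V=\ker(A-I)\neq 0$; moreover $\gamma^N$ is then a nontrivial translation with vector in $\Lambda\cap V$, which simultaneously supplies the vector whose half you need. With these two corrections your argument closes and coincides in substance with the paper's proof.
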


\begin{proof} We will show that such a manifold admits an admissible translation. Let $\gamma$ be a lift in $\Gamma$ of a generator of
the holonomy group (supposed to be of order $N$). Its  linear part must fix some non-trivial vector space $V$, and $\gamma^N$
is a non-trivial translation $t_a$, where $a\in V$.
Then, using oddness, we see that the translation $t_{a/2}$ does not admit fixed points.
\end{proof}

\subsection{Manifolds with $\ZZ_2$ holonomy group}\label{Z2}
Suppose that $\RR^n/\Gamma$ has holonomy $\ZZ_2$. Its realization as a subgroup  of $O(n)$
is just $\{I,\sigma_P\}$, where $\sigma_P$ is an orthogonal reflexion
with respect to a $k$-dimensional subspace $P$.
The lattice $\Lambda$ has index $2$ in $\Gamma$, and its non trivial class  $\mathcal{R}$
is a set of glide-reflections of type $\sigma_{Q,a}$, where 
the $k$-dimensional affine subspaces $Q$ are parallel to $P$. It is clear that if $\sigma_{Q,a}\in\mathcal{R}$, then $2 a\in\Lambda$.\\

Now, for $\sigma_{Q_1,a_1}$ and  $\sigma_{Q_2,a_2}$ in $\mathcal{R}$, let $d$ be the translation orthogonal to the $Q_i$ which sends $Q_1$ onto $Q_2$.

\begin{lemma} \label{glide} We have the following properties:

\begin{enumerate}
  \item the vector $a_1+a_2+2d$ belongs to $ \Lambda$.
  
 \item  the set of vectors $d$ is a lattice of dimension $(n-k)$.
\end{enumerate}
 
\end{lemma}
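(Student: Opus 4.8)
The key observation is that $\mathcal{R}$, the nontrivial coset of $\Lambda$ in $\Gamma$, consists of glide reflections $\sigma_{Q,a}$ whose mirrors $Q$ are affine subspaces parallel to the fixed linear subspace $P$ of the holonomy generator $\sigma_P$. Composing two such elements lands back in $\Lambda$, and this composition is what produces both assertions. So the plan is: first compute $\sigma_{Q_2,a_2}\circ\sigma_{Q_1,a_1}$ explicitly as an affine isometry, observe it is a translation (its linear part is $\sigma_P\circ\sigma_P=\mathrm{Id}$), identify the translation vector, and then read off (i) from membership in $\Lambda$; for (ii), note that the set of such translation vectors, together with the displacement vectors $d$, generates the part of $\Lambda$ transverse to $P$.

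For (i): write $\mathbb{R}^n=P\oplus P^\perp$ (orthogonal decomposition through a chosen origin in $Q_1$, say), so $\sigma_P$ is $+\mathrm{Id}$ on $P$ and $-\mathrm{Id}$ on $P^\perp$. With the origin on $Q_1$ we have $\sigma_{Q_1,a_1}(x)=\sigma_P(x)+a_1$ where $a_1\parallel P$, and $\sigma_{Q_2,a_2}(x)=\sigma_P(x-d)+d+a_2=\sigma_P(x)+2d+a_2$ since $\sigma_P(d)=-d$ (as $d\perp P$) and $a_2\parallel P$. Composing:
\[
\sigma_{Q_2,a_2}\circ\sigma_{Q_1,a_1}(x)=\sigma_P\bigl(\sigma_P(x)+a_1\bigr)+2d+a_2=x+a_1+2d+a_2,
\]
using $\sigma_P(a_1)=a_1$. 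Thus the composition is the translation $t_{a_1+a_2+2d}$, and since both factors lie in $\mathcal{R}$, the product lies in $\Lambda$; hence $a_1+a_2+2d\in\Lambda$, which is (i).

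For (ii): the set of vectors $d$ arising this way is, up to the $P$-parallel corrections $a_1+a_2$, exactly the image in $P^\perp$ of the translation subgroup $\Lambda$; more precisely, fixing one $\sigma_{Q_1,a_1}\in\mathcal{R}$, every element of $\mathcal{R}$ is $t_\lambda\circ\sigma_{Q_1,a_1}$ for a unique $\lambda\in\Lambda$, and its mirror $Q_2$ is the translate of $Q_1$ by the $P^\perp$-component of $\lambda$, i.e. $d=\pi_{P^\perp}(\lambda)$ up to the half-integer ambiguity in $a$. So the set of $d$'s is $\pi_{P^\perp}(\Lambda)$ (possibly after adjusting by the discrete set of $P$-parallel vectors, which does not affect the $P^\perp$-component). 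Since $\Lambda$ is a lattice of rank $n$ in $\mathbb{R}^n$ and $\sigma_P$ preserves $\Lambda$ (being in the holonomy), $\Lambda$ splits as $(\Lambda\cap P)\oplus(\Lambda\cap P^\perp)$ after at most passing to the projection, and $\pi_{P^\perp}(\Lambda)$ is a full-rank lattice in $P^\perp\cong\mathbb{R}^{n-k}$; this is exactly (ii).

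The main obstacle is the bookkeeping in (ii): one must check that the $d$'s genuinely fill out a full-rank lattice and are not confined to a sublattice or a coset — this uses that $\Gamma$ is crystallographic (cocompact), so $\mathcal{R}$ cannot be "thin" in the $P^\perp$ direction, together with the invariance of $\Lambda$ under $\sigma_P$ to control the interaction between the $P$- and $P^\perp$-components. One should also be slightly careful that $d$ is defined only as the orthogonal displacement between mirrors, so different pairs may give the same $d$; the claim is about the set of values of $d$, and closure under subtraction follows from composing three elements of $\mathcal{R}$ (an odd number) to return to $\mathcal{R}$, whose mirrors then differ by sums/differences of the individual $d$'s.
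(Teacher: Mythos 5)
Your argument is correct and follows essentially the same route as the paper: part (i) is exactly the composition identity $\sigma_{Q_2,a_2}\circ\sigma_{Q_1,a_1}=t_{a_1+a_2+2d}$ (the paper states it, you derive it), and for part (ii) you, like the paper, identify the set of vectors $d$ with a projected copy of $\Lambda$ in $P^{\perp}$ and verify that it is discrete and cocompact, hence a lattice of rank $n-k$. The only slips are harmless: $t_\lambda\circ\sigma_{Q_1,a_1}$ moves the mirror by \emph{half} the $P^{\perp}$-component of $\lambda$, so the set of $d$'s is $\tfrac12\pi_{P^{\perp}}(\Lambda)$ rather than $\pi_{P^{\perp}}(\Lambda)$ (still a full-rank lattice in $P^{\perp}$), and the discreteness you attribute to the $\sigma_P$-invariance of $\Lambda$ is exactly the paper's observation that $2a_i\in\Lambda$ together with (i) forces $4d\in\Lambda$.
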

\begin{proof}
 The first claim follows from the fact that $\sigma_{Q_1,a_1}\circ \sigma_{Q_2,a_2}=a_1+a_2+2d$. 
 Now, we have a transitive action (by translations) of $\Lambda/2$ on the set of
 $k$-planes $Q$ such that $\sigma_{Q,a}\in \mathcal{R}$. Pick one of them
 $Q_0$. The third claim amounts to say that the image of $\Lambda+ Q_0$ is
 a lattice in $\RR/Q_0$. It is clearly cocompact.
 Now, if  $a_1+a_2+2d\in \Lambda$ and $2(a_1+a_2)\in\Lambda$, then $4d\in\Lambda$,
 which proves discreteness.
 
\end{proof}
 
 \begin{theorem}
  If an $n$-dimensional Bieberbach manifold has holonomy $\ZZ_2$, then it is a geodesic boundary.
  \end{theorem}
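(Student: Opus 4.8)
The plan is to exhibit an admissible isometry, i.e. a fixed-point-free involution, and invoke Theorem \ref{maintheo} (equivalence of (i) and (ii)). By Remark \ref{involution}, since the holonomy group $\ZZ_2$ has maximal order among the holonomy groups of non-abelian Bieberbach groups in the given geometric situation (the image in $O(n)$ is already forced to be $\{I,\sigma_P\}$ and any admissible isometry not a translation would double the holonomy, which is impossible once we also want a manifold of type $\ZZ_2$—but in fact we can simply look for a translation directly), it suffices to produce a translation $t_w$ with $2w\notin\Lambda$ but with $t_w$ still normalizing $\Gamma$ and generating a free action together with $\Gamma$. The natural candidate, suggested by the dimension-$3$ computations for $B_2$ and by Lemma \ref{glide}, is $w = \tfrac{a_1+a_2}{2}$, where $\sigma_{Q_1,a_1},\sigma_{Q_2,a_2}\in\mathcal{R}$ are chosen so that $Q_1=Q_2$ (which one can arrange since $\mathcal{R}$ acts transitively on a whole affine hyperplane's worth of parallel $k$-planes, or more simply take $\sigma_{Q,a}\in\mathcal R$ and also $\sigma_{Q,a}\circ t_\lambda\in\mathcal R$ for $\lambda\in\Lambda$, so that differences of glide vectors over the same $Q$ run through $\Lambda$).

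The verification then splits into the three checks of Remark \ref{involution}. First, $t_w$ normalizes $\Gamma$: conjugation of $t_w$ by any $\gamma\in\Gamma$ is the translation by the linear part of $\gamma$ applied to $w$; since every linear part is $I$ or $\sigma_P$ and $\sigma_P(w) - w$ is (twice the normal component of $w$, up to sign) — here is where the choice of $w$ matters — one checks $\sigma_P(w)-w\in\Lambda$ using Lemma \ref{glide}(i), so conjugation sends $t_w$ into the coset $t_w\Lambda\subset\Gamma$. Second, $t_w^2 = t_{2w} = t_{a_1+a_2}$; since $\sigma_{Q,a_1}\circ\sigma_{Q,a_2}$ (composition over the \emph{same} plane $Q$, so $d=0$) equals $t_{a_1+a_2}$, and that composition lies in $\Lambda$, we get $t_w^2\in\Lambda\subset\Gamma$, so the induced map on $\RR^n/\Gamma$ is an involution. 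Third, and this is the crux, one must show the group $\langle\Gamma, t_w\rangle$ acts freely. An element of this group is either in $\Gamma$ (no fixed points, as $\Gamma$ is Bieberbach) or of the form $t_w\gamma$ with $\gamma\in\Gamma$; its linear part is that of $\gamma$, so it is either a translation $t_{w+\lambda}$ with $w+\lambda\notin\Lambda$ (no fixed points since $w+\lambda\ne 0$ — note $2w\notin\Lambda$ guarantees $w\notin\Lambda$, so $w+\lambda\neq 0$), or it has linear part $\sigma_P$, hence is $\sigma_{Q',a'}$ for some affine $k$-plane $Q'$; such a glide reflection is fixed-point-free iff its translational (tangential-to-$Q'$) part is nonzero, and one shows this tangential part is $w$ plus a tangential vector, which cannot vanish because its normal component being zero forces $w$ into the span of $Q_1=Q_2$'s direction, making $2w\in\Lambda$, a contradiction.

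The main obstacle is the freeness check in the reflected case: one has to argue carefully that no element $t_w\gamma$ with $\gamma\in\Gamma\setminus\Lambda$ acquires a fixed point. Concretely this means verifying that the tangential component (parallel to $P$) of the translation part of the composed glide reflection $t_w\sigma_{Q,a}$ is never zero. This is exactly the place where the hypothesis $2w=a_1+a_2$ with both glides over the \emph{same} plane is used: it keeps $w$ from having any component along $P$ that could be cancelled, while keeping its normal component half-integral with respect to the $d$-lattice of Lemma \ref{glide}(ii). If the direct choice $w=\tfrac{a_1+a_2}{2}$ happens to land in $\Lambda$ in some degenerate configuration, the fallback is to instead take $w = d$ for a suitable $d$ from Lemma \ref{glide} with $d\notin$ (the lattice of $d$'s) $\cap\,\Lambda$-normal-part — this is the translation "$2d$" used for $B_2$ in Proposition \ref{nonorient} — and run the same three checks; at least one of the two candidates always works because, by Lemma \ref{glide}(ii), the $d$-lattice together with $\Lambda\cap P^\perp$ spans $P^\perp$ with index $2$ somewhere, providing the needed half-period.
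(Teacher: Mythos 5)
Your primary candidate does not work. You take $w=\tfrac{a_1+a_2}{2}$ with both glide reflections over the \emph{same} plane $Q$; since glide vectors are parallel to $Q$ (hence to $P$), this $w$ is tangential to $P$, and by Lemma \ref{glide}(i) with $d=0$ you correctly get $2w=a_1+a_2\in\Lambda$ --- which already contradicts your opening requirement ``$2w\notin\Lambda$''. The fatal point is the freeness check: composing $t_w$ with a glide reflection of $\Gamma$ over the same plane shifts the glide vector by $w$, and nothing prevents it from being cancelled. Concretely, $\sigma_{Q,a}^{-1}=\sigma_{Q,-a}\in\Gamma$, so with the admissible choice $a_1=a_2=a$ one gets $t_w\circ\sigma_{Q,-a}=\sigma_{Q,0}$, a pure reflection fixing all of $Q$. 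Already for the Klein bottle ($n=2$, $P$ a line): the glide vectors over a fixed line are the odd multiples of $a_0/2$, so every $w=\tfrac{a_1+a_2}{2}$ is an integer multiple of $a_0/2$; it either lies in $\Lambda$ (inducing the identity) or is an odd multiple of $a_0/2$, in which case $\langle\Gamma,t_w\rangle$ contains a pure reflection and does not act freely. Your final freeness argument is also internally inconsistent: you ``derive a contradiction'' from $2w\in\Lambda$, which is exactly what you established in the involutiveness step.

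The workable choice --- and the paper's --- is a translation with component \emph{perpendicular} to $P$ and no tangential contribution: then composing with any glide of $\Gamma$ leaves its (nonzero) tangential glide vector untouched, so freeness is automatic, and one only needs a perpendicular ``half-period'' $w\notin\Lambda$ with $2w\in\Lambda$ (normalization is then immediate since $(I-\sigma_P)w=2w\in\Lambda$). You mention this only as a fallback ($w=d$) for a supposedly degenerate configuration and do not verify it; in fact it is the main and only viable case, and producing the half-period is precisely where the paper's case analysis via Lemma \ref{glide} enters: when $P$ is a line one takes any translation in $\Lambda_0/2\setminus\Lambda$; when $\dim P\ge 2$ one shows the planes carrying glides form a one-dimensional family with $4d\in\Lambda$ and takes $t_d$ or $t_{2d}$ according to whether $\mathcal{T}_Q$ is constant or takes two values (the $B_1$ versus $B_2$ dichotomy). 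Your closing claim that ``at least one of the two candidates always works'' is exactly the content that must be proved, and it is not supplied.
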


\begin{proof}

 The case when $\sigma_P$ is a reflection with respect to a line parallel to a vector $a_0 \in \Lambda$ is completely analogous to
 that of the manifold $C_2$, which we described in \ref{dim3}. The lattice $\Lambda$ is an orthogonal 
 sum $\Lambda_0\bigoplus\ZZ a_0$, and $\Gamma\setminus\Lambda$ is composed with glide reflections
 $\sigma_{D,a}$, where the vectors $a$ are odd integer multiples of $\frac{a_0}{2}$.
 The lines $D$ are translated of one of them by a vector of $\Lambda_0/2$.
 Any translation of  $\Lambda_0/2\setminus \Lambda$ goes to the quotient as a fixed
 point free isometric involution of $\RR^n/\Gamma$.
 
 \smallskip
 
 Now, we consider the case where $\sigma_P$ is a reflection with respect to a $k$-dimensional plane, where $k\geq 2$.
 For an affine $k$-plane $Q$, set
 $$
 \mathcal{T}_Q = \{v\in\RR^n,  \sigma_{Q,v}\in \Gamma\}
 $$

Lemma \ref{glide} shows that the affine $k$-planes $Q$ such that $\mathcal{T}_Q$ is not empty
are parallel. Following the notations
of the previous lemma, the distance of two neighboring $k$-planes is $\vert d\vert$,
and the vector $4d$ is in $\Lambda$. 
Therefore, by composing a glide-reflection with a translation $t_{4d}$,
we see that for two $k$-planes whose distance is $2\vert d\vert$ the sets  $\mathcal{T}_Q$ are the same.
There are two possibilities.

\begin{enumerate}
 \item  $\mathcal{T}_Q $ is always the same for all the $Q$'s in $\mathcal{R}$,
 and can be written as $a/2 + \Lambda_0$, where $a\in\Lambda$
 and $\Lambda_0$ is the orthogonal projection of $\Lambda$ onto $P$.
 In this case, $\Lambda$ itself is the orthogonal sum $\Lambda_0\bigoplus 2\ZZ d$.
 The situation is the exact analogue of that of the manifold $B_1$ in dimension $3$.
 
 \item there are exacly two possibilities for  $\mathcal{T}_Q$, namely
 $a_1/2+\Lambda_0$ and $a_2/2+\Lambda_0$, where $a_1$ and $a_2$ are independent and belong to $\Lambda$.
 The lattice $\Lambda$ is the (not orthogonal!) sum
 $\Lambda_0\bigoplus\ZZ(\frac{a_1+a_2}{2}+2d)$. This is the situation of $B_2$ in dimension $3$.
 
\end{enumerate}

The translation $t_{d}$ in the first case, $t_{2d}$ in the second case, gives by going to the quotient
a fixed point free isometric involution.
\end{proof}

\section{Hantzsche-Wendt manifolds of dimension $4$}\label{Han-Wen}

In this section, we shall study Generalized Hantesche-Wendt n-manifolds, i.e., Bieberbach n-manifolds with holonomy isomorphic to $\mathbb{Z}^{n-1}$. In dimension $3$, there are three Generalized Hantezsche-Wendt manifolds. The manifolds $B_3$ and $B_4$ are a geodesic boundary whereas the manifold $C_{2,2}$ is not. This shows that the data of the holonomy group of a Bieberbach manifold is generally not sufficient to decide whether it satisfies that property.\\


In dimension $4$, there are $12$ Generalized Hantesche-Wendt manifolds up to an affine isometry. We shall  determine which among these manifolds are or are  not geodesic boundaries of a compact flat manifold. To see that, a rough description of these manifolds is necessary. In what follows,
we view $\RR^4$ as the Euclidean spaces of dimension $4$ equipped with its  standard orthonormal basis. All these manifolds are quotients of flat tori $\RR^n/\Lambda$, where $\Lambda$ is the orthogonal lattice generated by $ae_1, be_2, ce_3, de_4$.\\

In our description, we use an affine isometries of the Euclidean space which clearly go to the quotient as isometries of the torus $\RR^n/\Lambda$, and will abusively use the same notation for an affine isometry and the corresponding quotient map. The $4$-dimensional Hantzsche-Wendt manifolds will be then the quotient of $\RR^n/\Lambda$ by the generators (affine isometries) given in the table below.

\textbf{Notations:} A symmetry with respect to the hyperplane  $yzt$ will be denoted by $S_x$ (similarly for symmetries with respect to $xyt$, $xyz$ and $xzt$).\\

A symmetry with respect to the plane $zt$ will be denoted by $S_{xy}$ (similarly for symmetries with respect to $xz$, $xt$, $yz$, $yt$ and $zt$).\\

A translation of vector $\frac{a}{2}e_1$ will be denoted by $T_x$ (similarly for translations $\frac{b}{2}e_2$, $\frac{c}{2}e_3$ and $\frac{d}{2}e_4$). The composition of a translation $T_x$ and a translation $T_y$ will be denoted $T_{xy}$.

\smallskip

\begin{table}[htbp]\renewcommand{\arraystretch}{1.2}
\begin{center}
\begin{tabular}{|c|c|c|c|}
\hline
 H-W Manifold & Generators & H-W Manifold & Generators \\
\hline
$H-W_{1}$ & $T_{yt}S_z,\ T_zS_{xt},\ T_xS_{yt}$ & $H-W_{2}$ & $T_tS_z,\ T_{yz}S_{xt},\ T_xS_{yt}$ \\
\hline
$H-W_{3}$ & $T_tS_z, \ T_{yz}S_{xz},\ T_{x}S_{yz}$ & $H-W_{4}$ & $T_{yz}S_z,\ T_{yz}S_{xz},\ T_xS_{yz}$\\
\hline
 $H-W_{5}$ & $T_yS_z,\ T_tS_{xz},\ T_xS_{yz}$ & $H-W_{6}$ & $T_{yz}S_z, \ T_{t}S_{xz}, \ T_xS_{yz}$\\
\hline
 $H-W_{7}$ & $T_yS_z,\ T_{yt}S_{xz},\ T_xS_{yz}$ & $H-W_{8}$ & $T_{xz}S_z, \ T_{yt}S_{xz}, \ T_xS_{yz}$\\
\hline
 $H-W_{9}$ & $T_{yz}S_z,\ T_{yt}S_{xz},\ T_xS_{yz}$ & $H-W_{10}$ & $T_{xyz}S_z, \ T_{yt}S_{xz}, \ T_xS_{yz}$\\
\hline
 $H-W_{11}$ & $T_yS_z,\ T_{yzt}S_{xz},\ T_xS_{yz}$ & $H-W_{12}$ & $T_{yz}S_z, \ T_{yzt}S_{xz}, \ T_xS_{yz}$\\
\hline
  \end{tabular}\end{center}
\end{table}

Now, we come to the main result of this section.

\begin{theorem}
 Among $4$-dimensional generalized Hantzsche-Wendt manifolds,
 only four of them (all non-orientable) are not totally geodesic boundaries, namely the manifolds with vanishing Betti number $H-W_1$ and $H-W_2$ and the manifolds $H-W_3$ and $H-W_4$.
\end{theorem}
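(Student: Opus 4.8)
The plan is to run the criterion of Remark \ref{involution}: $\RR^4/\Gamma_i$ is a totally geodesic boundary if and only if it admits an admissible isometry, and — as noted there — when the holonomy has maximal order it is enough to search among the half-lattice translations $t_v$ with $2v\in\Lambda$ and $v\notin\Lambda$. So the first step is to record that $\ZZ_2^3$ is indeed maximal among holonomy groups of $4$-dimensional Bieberbach manifolds (here it is realised by diagonal $\pm1$ matrices not containing $-I$, and it is not contained in any larger subgroup of $O(4)$ that occurs as such a holonomy). For any half-period $v$ one checks at once, using $2v\in\Lambda$ and that all linear parts are diagonal, that $t_v$ normalises $\Gamma_i$ and that $t_v^2\in\Lambda\subset\Gamma_i$; thus the only question is whether $\langle\Gamma_i,t_v\rangle$ acts freely on $\RR^4$.

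Next I would make freeness explicit. Writing a general element of $\Gamma_i$ as $(A,b)$ with $A=\mathrm{diag}(\varepsilon_1,\dots,\varepsilon_4)$, $\varepsilon_j=\pm1$, the affine map $t_v\circ(A,b)$ and all its $\Lambda$-translates are fixed-point-free on $\RR^4$ exactly when $v+b$ has some coordinate $j$ with $\varepsilon_j=+1$ that is a half-period (lies in $\tfrac12 a_j\ZZ\setminus a_j\ZZ$, with $a_j\in\{a,b,c,d\}$). Hence $t_v$ is admissible if and only if, for each of the seven non-trivial holonomy elements $A$ — with translation part $b_A$ computed from the generators listed in the table — there is a $(+1)$-eigendirection of $A$ along which $v+b_A$ is a half-period; the identity imposes nothing since $v\notin\Lambda$. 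This reduces the whole question to a finite check over the $15$ non-zero classes in $\tfrac12\Lambda/\Lambda$.

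For the eight manifolds that are geodesic boundaries I would exhibit, case by case, one suitable $v$ and verify the short list of conditions above. For $HW_1,HW_2,HW_3,HW_4$ the goal is the opposite: to show that no $v$ works. In each case the holonomy elements whose $(+1)$-eigenspace is a line force the corresponding coordinates of $v$ to vanish modulo $\Lambda$ — for $HW_1$ and $HW_2$ (the two with vanishing first Betti number) this pins down three coordinates and leaves a single candidate, and for $HW_3$ and $HW_4$ it pins down one coordinate and leaves only a handful. For each surviving candidate one then produces a further holonomy element $A$, typically one of the listed generators or a product of two of them, for which $v+b_A$ is a half-period in none of the $(+1)$-eigendirections of $A$; then $t_v\circ(A,b_A)$ has a fixed point, a contradiction. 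Hence these four admit no admissible translation, and by the reduction no admissible isometry at all, so they are not geodesic boundaries.

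The main obstacle is really the first step: one must be confident that the diagonal $\ZZ_2^3$ cannot be enlarged to the holonomy of a $4$-dimensional Bieberbach group, i.e. that an admissible isometry cannot have linear part outside $\ZZ_2^3$. Ruling out $-I$, hence the full $\ZZ_2^4$, is immediate (its lift would be an involution with a fixed point); excluding the order-$16$ overgroups obtained by adjoining a signed coordinate permutation stabilising the given $\ZZ_2^3$ needs more care, and that is where the argument has to be watertight. Everything after that is the mechanical case analysis above, whose only real cost is the length of carrying it out over all twelve manifolds.
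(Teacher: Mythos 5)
Your proposal follows essentially the same route as the paper: reduce, via the maximality of the holonomy order, to testing the half-lattice translations in $\Lambda/2\setminus\Lambda$, then exhibit an admissible translation for the eight manifolds that bound and rule out every candidate for $H\!-\!W_1,\dots,H\!-\!W_4$ by finding an element whose composite with $t_v$ has a fixed point. The step you single out as the main obstacle — that no $4$-dimensional Bieberbach group has holonomy of order $16$ — is exactly the fact the paper invokes without proof, as a known consequence of the classification of $4$-dimensional crystallographic groups, so no extra argument beyond citing that classification is needed there.
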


\begin{proof} In dimension $4$, there does not exist a Bieberbach manifold with a holonomy group of order $16$. Therefore, it would be enough to check, for each $H-W_i$, if it admits an admissible translation.\\

For the manifolds $H-W_i$, where $i\in \{5,6,7,8,9,11,12\}$, the translation $T_z$ is admissible. For the manifolds $H-W_{10}$, The translation $T_y$ is admissible.\\

Now, for the manifolds $H-W_{i}$, $i=1,2,3$ or $4$, direct inspection shows that, in the four cases we have to consider, a translation in $\Lambda/2 \setminus \Lambda$ that goes to the quotient always admits fixed points.

 \end{proof}

As a conclusion, it seems that if a Bieberbach manifold has an holonomy group which is a product of $\ZZ_2$, the situation becomes very arbitrary even if the order of the holonomy becomes maximal (in the class of the manifolds whose holonomy is a product of $\ZZ_2$) in a given dimension.
 
\bigskip

\emph{Acknowledgements:} The authors would like to thank Andrezj Szczepa\'nski for giving the description of 4-dimensional Bieberbach manifolds with zero first Betti number by the use of the useful package CARAT.

\smallskip

\end{document}